\def\para#1{\vskip .4\baselineskip\noindent{\bf #1}}
\newtheorem{thm}{Theorem}[section]
\newtheorem{deff}{Definition}[section]
\newtheorem{lem}{Lemma}[section]
\newtheorem{prop}{Proposition}[section]
\theoremstyle{definition}
\theoremstyle{remark}
\newtheorem{rem}{Remark}
\numberwithin{equation}{section}
\DeclareMathOperator{\ess}{ess}
\newcommand{\eps}{\varepsilon}
\newcommand{\F}{\mathcal{F}}
\newcommand{\E}{\mathbb{E}}
\newcommand{\N}{\mathbb{N}}
\newcommand{\0}{\mathcal{O}}
\newcommand{\PP}{\mathbb{P}}
\newcommand{\R}{\mathbb{R}}
\newcommand{\abs}[1]{\left\vert#1\right\vert}
\numberwithin{equation}{section}
\DeclareMathOperator*{\esssup}{ess\,sup}
\newcommand{\bed}{\begin{displaymath}}
\newcommand{\eed}{\end{displaymath}}
\newcommand{\bea}{\bed\begin{array}{rl}}
\newcommand{\eea}{\end{array}\eed}
\newcommand{\barray}{\begin{array}{ll}}
\newcommand{\earray}{\end{array}}
\def\disp{\displaystyle}
\newcommand{\1}{\boldsymbol{1}}
\def\bar{\overline}
\def\hat{\widehat}
\def\a.s{\text{\;a.s.\;}}
\title[SPDE Models for Predator-Prey Equations]{Stochastic Partial Differential Equation Models for Spatially Dependent Predator-Prey Equations}
\author[Nhu N. Nguyen, George Yin]{}
\subjclass{Primary: 60H15,
  92D25, 92D40, 35Q92.}
\keywords{Predator-prey model, SPDE, mild solution, positivity, extinction, permanence.}
\email{nhu.math.2611@gmail.com}
\email{gyin@wayne.edu}
\thanks{This
research was supported in part by the National Science Foundation under grant DMS-1710827.}
\begin{document}

\maketitle

\centerline{\scshape Nhu N. Nguyen}
\medskip
{\footnotesize
 \centerline{Department of Mathematics}
   \centerline{Wayne State University}
   \centerline{Detroit, MI 48202, USA}
}

\medskip

\centerline{\scshape George Yin}
\medskip
{\footnotesize
 \centerline{Department of Mathematics}
   \centerline{Wayne State University}
   \centerline{Detroit, MI 48202, USA}
}

\begin{abstract}
Stemming from the stochastic Lotka-Volterra or
predator-prey equations, this work aims to model the spatial inhomogeneity by using stochastic partial differential equations (SPDEs).
Compared to the
classical models, the SPDE models are more versatile.
To incorporate
 more qualitative features of the ratio-dependent models,
 the
 Beddington-DeAngelis functional
response is also used. To analyze the systems under consideration, first existence and uniqueness of solutions of the SPDEs are obtained using the notion of mild solutions.  Then sufficient conditions for permanence and extinction are derived.

\end{abstract}

\section{Introduction}
The predator-prey models or  Lotka-Volterra equations
have a long history and have been widely studied
because of their importance in ecology.
Such models have also been used in for example, statistical mechanics and other related fields.
In 1925, the model was first introduced in \cite{Lotka}
as
follows
\begin{equation*}
\begin{cases}
\disp
{dU(t) \over dt} =\big[U(t)\big(a-bV(t)\big)\big],\\[2ex]
\disp {dV(t) \over dt}=\big[V(t)\big(-c+fU(t)\big)\big].
\end{cases}
\end{equation*}
To improve the model,
 the prey and predator self-competition terms have been added to the original model while
different types of functional responses such as  Holling types I-III \cite{Holling}, ratio-dependence type \cite{Arditi}, and Beddington-DeAngelis type \cite{Bed, DeAn}, etc.,
have also been considered.
Recently, Li et al.   studied a predator-prey system with Beddington-DeAngelis functional response in \cite{Li-11}, in which the density functions are spatially homogeneous. The model is represented by 
\begin{equation*}
\begin{cases}
\disp
{dU(t) \over dt}=\Big[U(t)\big(a_1-b_1U(t)\big)-\dfrac{c_1U(t)V(t)}
{m_1+m_2U(t)+m_3V(t)}\Big],\;t\geq 0,\\[2ex]
\disp {dV(t) \over dt}=\Big[V(t)\big(-a_2-b_2V(t)\big)
+\dfrac{c_2U(t)V(t)}{m_1+m_2U(t)+m_3V(t)}\Big],\;t\geq 0,
\end{cases}
\end{equation*}
where $a_i$, $b_i$, $c_i$, and $m_i$ are positive constants. Although significant progress has been made, it is well recognized that noise effect often needs to be taken into consideration and that
allowing spacial inhomogeneous variation could improve the model further.
To take  environment noise into consideration,
one considers a stochastic differential equation model as follows
\begin{equation*}
\begin{cases}
dU(t)\!\!=\!\!\Big[U(t)\big(a_1-b_1U(t)\big)-\dfrac{c_1U(t)V(t)}
{m_1+m_2U(t)+m_3V(t)}\Big]dt+\sigma_1U(t)dB_1(t),t\geq 0,\\[1ex]
dV(t)\!\!=\!\!\Big[V(t)\big(-a_2-b_2V(t)\big)
\!+\dfrac{c_2U(t)V(t)}{m_1+m_2U(t)+m_3V(t)}\Big]dt\!+\!\sigma_2V(t)dB_2(t),t\geq 0,
\end{cases}
\end{equation*}
where $B_1(t)$ and $B_2(t)$ are independent and real-valued Brownian motions,  and $\sigma_1$ and $ \sigma_2\neq 0$ are intensities of the noises.
Such a problem has been
studied  in \cite{DDY-16}. In fact, the study is related to what is known as Kolmogorov systems, which has a wide range of applications in ecology \cite{DDNY16},  epidemiology, as well as other fields such as social networks.
The long-time behaviors have been characterized by providing a threshold between extinction and permanence.
To make the model more suitable for a wider class of systems, it is natural
to include spatial dependence. In the deterministic setup, it has been shown that not only is the spatial inhomogeneity mathematically interesting, but also it is crucially important for practical concerns. Taking the
  spatially inhomogeneous case into consideration, a predator-prey reaction-diffusion system takes the form
\begin{equation*}
\begin{cases}
\disp {\partial \over \partial t}U(t,x)=d_1\Delta U(t,x) +U(t,x)\big(a_1(x)-b_1(x)U(t,x)\big)
\\\hspace{4.5cm}-\dfrac{c_1(x)U(t,x)V(t,x)}{m_1(x)+m_2(x)U(t,x)+m_3(x)V(t,x)}\;\text{in}\;\R^+\times\0,\\
\disp {\partial \over \partial t}V(t,x)=d_2\Delta V(t,x) + V(t,x)\big(-a_2(x)-b_2(x)V(t,x)\big)
\\\hspace{4.5cm}+\dfrac{c_2(x)U(t,x)V(t,x)}{m_1(x)+m_2(x)U(t,x)
+m_3(x)V(t,x)}\;\text{in}\;\R^+\times\0,\\
\partial_{\nu}U(t,x)=\partial_{\nu}V(t,x)=0\quad\quad\quad\quad\text{on} \;\;\;\;\R^+\times\partial\0,\\
U(x,0)=U_0(x),V(x,0)=V_0(x)\quad\text{in} \;\;\;\;\0,
\end{cases}
\end{equation*}
where $\Delta $ is the Laplacian with respect to the spatial variable, $\0$ is a bounded smooth domain
of $\R^l$ ($l\geq 1$), $\partial_{\nu}$ denotes the directional derivative with the $\nu$ being the outer normal direction on $\partial \0$, and $d_1$ and $d_2$ are positive constants representing the diffusion rates of the prey and predator population densities, respectively. In contrast to the previous cases,
 in lieu of constant values,
 $a_i(x),$ $b_i(x),$ $c_i(x),$ and  $m_i(x)\in C^2(\bar\0,\R)$ are
 allowed to be positive functions. Recently, spatially heterogeneous  systems have been widely studied;
see \cite{Ai-17,G-07,Li-18,LLL,Lou-17,WZ-18}  and reference therein.
It has been demonstrated that including spatial inhomogeneity has provided better models with high fidelity.
As argued in \cite{NeuP}, a fundamental problem faced by ecologists is that the spatial and temporal scales at which measurements are practical. Much evidence demonstrates
 the importance of  interactions and dispersal, and the importance of including spatial dependence in the formulation. In the aforementioned paper, the authors proposed a
  specific spatially dependent model.

 This work presents our initial effort in treating random environmental noise,
  as well as taking into consideration of spatial inhomogeneity.
 In view of the progress to date, this paper proposes and analyzes a
 predator-prey model under stochastic influence and spatial inhomogeneity. We consider a stochastic partial differential equation model with initial and boundary data as follows
\begin{equation}\label{eq}
\begin{cases}
dU(t,x)=\Big[d_1\Delta U(t,x) +U(t,x)\big(a_1(x)-b_1(x)U(t,x)\big)
\\
\hspace{1cm}-\dfrac{c_1(x)U(t,x)V(t,x)}{m_1(x)+m_2(x)U(t,x)+m_3(x)V(t,x)}\Big]dt+ U(t,x)dW_1(t,x), \ \;\text{in}\;\R^+\times\0, \\
dV(t,x)=\Big[d_2\Delta V(t,x) + V(t,x)\big(-a_2(x)-b_2(x)U(t,x)\big)\\
\hspace{1cm}+\dfrac{c_2(x)U(t,x)V(t,x)}{m_1(x)+m_2(x)U(t,x)+m_3(x)V(t,x)}\Big]dt+ V(t,x)dW_2(t,x), \ \;\text{in}\;\R^+\times\0, \\
\partial_{\nu}U(t,x)=\partial_{\nu}V(t,x)=0, \quad\text{ on} \;\;\;\;\R^+\times\partial\0,\\
U(x,0)=U_0(x),V(x,0)=V_0(x), \quad\text{ for } \ x \in\0,
\end{cases}
\end{equation}
where $W_1(t,x)$ and $W_2(t,x)$ are $L^2(\0,\R)$-valued Wiener processes, which represent the noises in both time and space.
We refer the readers to \cite{Prato} for more details on the $L^2(\0,\R)$-valued Winner process.

The rest of the paper is arranged as follows.
Section \ref{sec:pre} gives some preliminary results and also formulates the problem to be studied precisely.
Section \ref{sec:exist} establishes the existence and uniqueness of the solution of the associated stochastic partial differential equations as well as its positivity and its continuous dependence on initial data. Section \ref{sec:longtime} introduces a sufficient condition for the extinction and permanence. Finally, Section \ref{example} provides an example.

\section{Formulation and Preliminaries}\label{sec:pre}
Let $\0$ be a bounded domain in $\R^l$ ($l\geq 1$) having a regular boundary and $L^2(\0,\R)$ be the separable Hilbert spaces, endowed with the scalar product
$$\langle u_1,v_1 \rangle_{L^2(\0,\R)}:=\int_{\0}u_1(x)v_1(x)dx,$$
with the corresponding norm $\sqrt{\langle \cdot,\cdot \rangle}$. We say $u_1\geq 0$ if $u_1(x)\geq 0$ almost everywhere in $\0$.
Moreover, we denote by $L^2(\0,\R^2)$ the space of all functions $u(x)=\big(u_1(x),u_2(x)\big)$ where $ u_1,u_2\in L^2(\0,\R)$ endowed with the inner product
\begin{align*}
\langle u,v \rangle_{L^2(\0,\R^2)}&:=\int_\0 \big\langle u(x),v(x) \big\rangle_{\R^2}dx=\int_{\0}\big(u_1(x)v_1(x)+u_2(x)v_2(x)\big)dx
\\&=\langle u_1,v_1 \rangle_{L^2(\0,\R)}+\langle u_2,v_2 \rangle_{L^2(\0,\R)},
\end{align*}
where
 $u(x)=(u_1(x),u_2(x))$ and
$v(x)=(v_1(x),v_2(x)).$ Then $L^2(\0,\R^2)$ is also a separable Hilbert spaces. In addition, for $\eps>0,p\geq 1$, denote by $W^{\eps,p}(\0,\R^2)$ (as well as $W^{\eps,p}(\0,\R)$) the Sobolev-Slobodeckij space (the Sobolev space with non-integer exponent).

Let $\big\{\Omega, \F,\{\F_t\}_{t\geq 0},\PP\big\}$ be a complete filtered probability space,  $L^p(\Omega;C([0,t]$, $L^2(\0,\R^2)))$ be the space of  predictable processes $u$
that takes
values in $C([0,t]$, $L^2(\0,\R^2))$, $\PP$-a.s. with the norm
$$\abs{u}^p_{L_{t,p}}:=\E \sup_{s\in [0,t]}\abs{u(s)}^p_{L^2(\0,\R^2)}.$$
Assume that $\{B_{k,1}(t)\}_{k=1}^\infty$ and $\{B_{k,2}(t)\}_{k=1}^\infty$
are independent sequences of $\{\F_t\}_{t\geq 0}$-adapted one-dimensional Wiener processes. Fix an orthonormal basis $\{e_k(x)\}_{k=1}^{\infty}$ in $L^2(\0,\R)$ and assume that it is uniformly bounded in $L^{\infty}(\0,\R)$, i.e.,
$$C_0:=\sup_{k\in\N}\esssup_{x\in\0}\abs{e_k(x)}<\infty.$$
We define the infinite dimensional Winner processes $W_i(t)$, the driving noise in equation \eqref{eq} as follows
$$\displaystyle W_i(t)=\sum_{k=1}^{\infty}\sqrt {\lambda_{k,i}}B_{k,i}(t)e_k,\quad i=1,2,$$
where $\{\lambda_{k,i}\}_{k=1}^{\infty}, (i=1,2)$ are sequences of non-negative real numbers satisfying
\begin{equation}\label{nuclearcondition}
\lambda_i :=\sum_{k=1}^{\infty}\lambda_{k,i}<\infty,\quad i=1,2.
\end{equation}
To proceed, let $A_1$ and $A_2$ be Neumann realizations of $d_1\Delta$ and $d_2\Delta$ in $L^2(\0,\R)$, respectively,
where the Laplace operator
is understood in the distribution sense;  see \cite[Appendix A]{Prato}. Then, $A_1$ and $A_2$ are infinitesimal generators of analytic semi-groups $e^{tA_1}$ and $e^{tA_2}$, respectively.
In addition, if we denote $A=(A_1,A_2)$, then it generates an analytic semigroup $e^{tA}=(e^{tA_1},e^{tA_2}).$ In \cite[Theorem 1.4.1]{Davies}, it is proved that the space $L^1(\0,\R^2 )\cap L^\infty(\0,\R^2)$ is
invariant under $e^{tA}$, so that $e^{tA}$ may be extended to a non-negative one-parameter semigroup $e^{tA(p)}$ on $L^p(\0;\R^2 )$, for all $1\leq p\leq\infty$. All these semi-groups are strongly continuous and consistent in the sense that $e^{tA(p)}u=e^{tA(q)}u$ for any $u\in L^p(\0,\R^2)\cap L^q(\0,\R^2)$; see \cite{Cerrai-book}.
Henceforth, we
suppress
the
 parameter $p$ and denote $e^{tA(p)}$ as  $e^{tA}$ whenever there is no confusion.
 Finally, we recall some well-known properties of operators $A_i$ and analytic semi-groups $e^{tA_i}$ for $i=1,2$ as follows
\begin{itemize}
\item $\forall u\in L^2(\0,\R)$ then $\int_ 0^t e^{sA_i}uds\in D(A_i)$ and $A_i(\int_0^t e^{sA_i}uds)=e^{tA_i}u-u.$
\item By Green's identity, it is possible to obtain
$\forall u\in D(A_i)$, $\int_\0A_i u(x)dx=0.$
\item The semigroup $e^{tA}$ satisfies the fowling properties
\begin{equation}\label{eta}
\begin{aligned}
\abs{e^{tA}u}_{L^\infty(\0,\R^2)}\leq c\abs{u}_{L^\infty(\0,\R^2)}\;\text{and}\;\abs{e^{tA}u}_{L^2(\0,\R^2)}\leq c\abs{u}_{L^2(\0,\R^2)}.
\end{aligned}
\end{equation}
\item For any $t,\eps>0$, $p\geq 1$, the semigroup $e^{tA}$ maps $L^p(\0,\R^2)$ into $W^{\eps,p}(\0,\R^2)$
 and $\forall u\in L^p(\0,\R^2)$
\begin{equation}\label{propertyA}
\abs{e^{tA}u}_{\eps,p}\leq c(t\wedge 1)^{-\eps/2}\abs{u}_{L^p(\0,\R^2)},
\end{equation}
for some constant $c$ independent of $u,t$.
\end{itemize}
For further details,
we refer the reader to the monographs \cite{Arendt,Davies,Ouhabaz} and references therein.

\def\disp{\displaystyle}
We rewrite  \eqref{eq} as a stochastic differential equation in infinite dimension
\begin{equation}\label{eq1}
\begin{cases}
\displaystyle dU(t)\!\!=\!\!\Big[A_1U(t)+U(t)\big(a_1-b_1U(t)\big)\!-\!
\dfrac{c_1U(t)V(t)}{m_1+m_2U(t)+m_3V(t)}\Big]dt\! +\! U(t)dW_1(t),\\
\displaystyle dV(t)\!\!=\!\!\Big[A_2V(t)\!+\! V(t)\big(\!-a_2\! -b_2V(t)\big)+
\!\dfrac{c_2U(t)V(t)}{m_1+m_2U(t)+m_3V(t)}\Big]dt\! +\! V(t)dW_2(t),\\
U(0)=U_0,\; \;V(0)=V_0.
\end{cases}
\end{equation}
As usual, we follow Walsh \cite{Walsh} to say that $(U(t),V(t))$ is a mild solution of \eqref{eq1} if
\begin{equation}\label{so}
\begin{aligned}
\begin{cases}
\displaystyle U(t)=e^{tA_1}U_0+&\!\!\!\disp \int_0^t e^{(t-s)A_1}\Big[U(s)\big(a_1-b_1U(s)\big)
\\&\hspace{3cm}-\dfrac{c_1U(s)V(s)}{m_1+m_2U(s)+m_3V(s)}\Big]ds+W_U(t),\\
\displaystyle V(t)=e^{tA_2}V_0+&\!\!\!\disp \int_0^t e^{(t-s)A_2}\Big[V(s)\big(-a_2-b_2V(s)\big)
\\&\hspace{3cm}+\dfrac{c_2U(s)V(s)}{m_1+m_2U(s)+m_3V(s)}\Big]ds+W_V(t),
\end{cases}
\end{aligned}
\end{equation}
where
$$W_U(t)=\int_0^t e^{(t-s)A_1}U(s)dW_1(s)\quad\text{and}\quad W_V(t)=\int_0^t e^{(t-s)A_2}V(s)dW_2(s),$$
or in the vector form
\begin{equation}\label{so-vec}
\displaystyle Z(t)=e^{tA}Z_0+\int_0^t e^{(t-s)A}F(Z(s))ds+W_Z(t),\quad Z_0=(U_0,V_0),
\end{equation}
where $Z(t)=(U(t),V(t))$, $e^{tA}Z_0:=(e^{tA_1}U_0,e^{tA_2}V_0)$, $W_Z(t)=(W_U(t), W_V(t))$ and $F(Z):=\big(F_1(Z),F_2(Z)\big)$, $e^{(t-s)A}F(Z):=(e^{(t-s)A_1}F_1(Z),e^{(t-s)A_2}F_2(Z))$ where
$$F_1(Z):=U(a_1-b_1U)-\dfrac{c_1UV}{m_1+m_2U+m_3V},$$
$$F_2(Z):=V(-a_2-b_2V)+\dfrac{c_2UV}{m_1+m_2U+m_3V}.$$

\begin{rem}{\rm
The first integrals on the right-hand sides of \eqref{so} are understood as Bochner integrals while $W_U(t), W_V(t)$ are the stochastic integrals (stochastic convolutions); see \cite{Prato}.
Moreover, $U(s)$ and $V(s)$ in the stochastic integrals are understood as multiplication operators.
The calculations involving vectors are understood as in the usual sense.
}
\end{rem}
 
 For many problems in population dynamics or ecology, an important question is whether an individual  will die out in the long time. That is, the consideration of extinction or permanence. Since the mild solution is used, let us modify some definitions in \cite{NY-18} as follows.
\begin{deff}{\rm
A population
with density
$u(t,x)$ is said to be extinct in the mean
if
$$\limsup_{t\to\infty}\E\int_\0 u(t,x)dx=0,$$
and that is said to be permanent in the mean if there exist a positive number $\hat\delta$, is independent of initial conditions of population, such that
$$\limsup_{t\to\infty}\E\int_\0 u^2(t,x)dx\geq\hat\delta.$$
In what follows, for convenience, we often suppress the ``in the mean'' when we refer to extinction and permanence in the mean, because we are mainly working with mild solutions.
}\end{deff}

\section{Existence, Uniqueness and Positivity of the Mild Solution}\label{sec:exist}
Since the coefficients are non-Lipschitz and faster than linear growth, the existence and uniqueness of the mild solutions are not obvious.  Although the existence of the mild solution of reaction-diffusion equations with non-Lipschitz term were treated in \cite{Cerrai-03}, we cannot apply directly the result in this paper since our coefficients do not satisfy the conditions in \cite{Cerrai-03}. However, we can follow the method in \cite{Cerrai-03} by considering the coefficients in each compact set
so that they are Lipschitz continuous and therefore we will define the solution using these solutions. In what follows, without loss of the generality we can assume $\abs{\0}=1$ for simplicity. Moreover, we also assume that the initial values are non-random.

\begin{thm}\label{existence}
For any initial data $0\leq U_0,V_0 \in L^\infty(\0,\R)$, there exists a unique mild solution $(U(t),V(t))$ of \eqref{eq1} belongs to $L^p(\Omega; C([0,T],L^2(\0,\R^2)))$ for any $T>0, p\geq 1.$ Moreover, the solution is positive, i.e., $U(t),V(t)\geq 0$
for any $t$ and depends continuously on initial data.
\end{thm}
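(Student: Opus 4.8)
The plan is to follow the localization strategy of \cite{Cerrai-03}: truncate the coefficients so that they become globally Lipschitz, solve the truncated equation by a fixed-point argument, and then remove the truncation by a stopping-time argument once positivity and suitable a priori bounds are available. The first step is to reduce to a globally Lipschitz problem. Two features obstruct a direct treatment: the self-limiting terms are quadratic, so $F$ does not map $L^2(\0,\R^2)$ into itself, and the Beddington--DeAngelis denominator need not be positive for sign-changing arguments. To handle both, I would replace $U,V$ inside the functional response by their positive parts, so that the denominator is bounded below by $m_1$ and the response is dominated by a linear function of its arguments, and for each $n\in\N$ compose the coefficients with the pointwise cut-off $\pi_n(r)=(-n)\vee r\wedge n$. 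The resulting drift $F^{(n)}$ is then bounded and globally Lipschitz from $L^2(\0,\R^2)$ into itself and agrees with $F$ on $\{0\le U,V\le n\}$. The multiplicative diffusion $Z\mapsto(U\,\cdot\,,V\,\cdot\,)$ need not be truncated: using the uniform bound $C_0$ on $\{e_k\}$ together with the nuclear condition \eqref{nuclearcondition}, it is globally Lipschitz (indeed linear) from $L^2(\0,\R^2)$ into the Hilbert--Schmidt operators.

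Next I would solve the truncated equation. On $L^p(\Omega;C([0,T],L^2(\0,\R^2)))$ define the map $\Gamma$ sending $Z$ to the right-hand side of \eqref{so-vec} with $F$ replaced by $F^{(n)}$. The deterministic convolution is controlled by \eqref{eta} and the Lipschitz bound on $F^{(n)}$, while the stochastic convolution is estimated by the Burkholder--Davis--Gundy inequality together with the factorization method, the smoothing estimate \eqref{propertyA}, and the nuclear condition. Choosing $T$ small (or working with a weighted norm) makes $\Gamma$ a contraction, and the Banach fixed-point theorem yields a unique $Z^{(n)}\in L^p(\Omega;C([0,T],L^2(\0,\R^2)))$ for every $T>0$ and $p\ge1$.

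I would then prove positivity of $Z^{(n)}$, using that $e^{tA}$ is positivity preserving, that the drift is quasi-positive (the first and second components vanish on $\{U=0\}$ and $\{V=0\}$ respectively, a property preserved under the truncation), and that the noise is multiplicative, so $U\,dW_1$ and $V\,dW_2$ vanish where $U$ and $V$ do. Since mild solutions do not directly admit It\^o's formula, I would justify this through a Yosida approximation (or a Galerkin scheme), applying It\^o's formula to $\E\abs{U^-(t)}^2$ and $\E\abs{V^-(t)}^2$ and passing to the limit to conclude $U^{(n)},V^{(n)}\ge0$.

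Finally I would remove the truncation. Set $\tau_n=\inf\{t:\abs{Z^{(n)}(t)}_{L^2(\0,\R^2)}\ge n\}$; by uniqueness the $Z^{(n)}$ are consistent below $\tau_n$, so $\tau_n$ is nondecreasing and they patch into a local solution $Z$ on $[0,\tau_\infty)$, $\tau_\infty=\lim_n\tau_n$. To show $\tau_\infty=\infty$ I would derive an a priori $L^2$ bound: applying It\^o's formula (again via approximation) to $\abs{Z(t\wedge\tau_n)}^2_{L^2(\0,\R^2)}$ and exploiting the dissipativity $\langle U,A_1U\rangle\le0$, the favorable signs of the cubic self-limiting terms and of the response term (which hold by positivity), the linear domination of the response, and the nuclear bound on the noise, one obtains $\E\sup_{t\le T}\abs{Z(t\wedge\tau_n)}^2_{L^2(\0,\R^2)}\le\abs{Z_0}^2 e^{CT}$; Markov's inequality then forces $\PP(\tau_n\le T)\To0$, hence $\tau_\infty=\infty$ and $Z$ is global. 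Uniqueness is inherited from the truncated problems, and continuous dependence on the initial data follows by the same energy estimate applied to the difference of two solutions, localized by a stopping time and closed with Gronwall's lemma. The main obstacle throughout is that the super-linear coefficients do not act on $L^2(\0,\R^2)$ and that mild solutions lack the regularity needed for It\^o's formula, so both the positivity and the a priori bounds hinge on a careful regularization argument and on using the good signs, which become available only once positivity is known.
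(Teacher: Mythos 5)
Your overall strategy (pointwise truncation of the coefficients, contraction mapping for the truncated equation, positivity via regularization and It\^o's formula applied to a negative-part functional, then patching by stopping times) is the same as the paper's, but there is a genuine gap at the patching step, and it is exactly the point where the paper has to work hardest. Your truncated drift $F^{(n)}$ agrees with $F$ only on the set where $0\le U(x),V(x)\le n$ for a.e.\ $x$, i.e.\ on $\{\abs{Z}_{L^\infty(\0,\R^2)}\le n\}$; but you define the stopping times by $\tau_n=\inf\{t:\abs{Z^{(n)}(t)}_{L^2(\0,\R^2)}\ge n\}$ and your a priori bound is an $L^2$ energy estimate. Knowing $\abs{Z^{(n)}(t)}_{L^2}<n$ does not prevent $Z^{(n)}(t,\cdot)$ from exceeding $n$ on a set of small measure, so below $\tau_n$ you cannot conclude $F^{(n)}(Z^{(n)}(t))=F^{(m)}(Z^{(n)}(t))=F(Z^{(n)}(t))$: the solutions $Z^{(n)}$ and $Z^{(m)}$ need not coincide below $\tau_n\wedge\tau_m$, and the patched process need not solve \eqref{so-vec}. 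The same mismatch undermines your continuous-dependence argument: the difference $F(Z^{z_1})-F(Z^{z_2})$ is locally Lipschitz only with respect to pointwise (not $L^2$) bounds, so an $L^2$-localized Gronwall argument does not close. Nor can you repair this by truncating with respect to the $L^2$ norm instead, because the quadratic terms map $L^2$ only into $L^1$; a pointwise truncation is forced, hence pointwise control of the solution is what the patching really requires.

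The fix --- and this is the technical heart of the paper's proof (Lemma \ref{lembounded}) --- is to prove an a priori bound, uniform in $n$, in the $L^\infty$ norm, namely $\E\sup_{s\in[0,t]}\abs{Z_n(s)}^p_{L^\infty(\0,\R^2)}\le c_p(t)\big(1+\abs{Z_0}_{L^\infty(\0,\R^2)}\big)$, and to define the stopping times via the $L^\infty$ norm, $\zeta_n=\inf\{t:\abs{Z_n(t)}_{L^\infty(\0,\R^2)}\ge n\}$ as in \eqref{tau}. That estimate uses positivity (so the drift of $U_n$ is dominated above by the linear term $a_1U_n$), the factorization representation of the stochastic convolution, Burkholder's inequality, the uniform bound $C_0$ on $\{e_k\}$, the smoothing estimate \eqref{propertyA}, and the Sobolev embedding $W^{\eps,p}\hookrightarrow L^\infty$ for $\eps p>l$ --- this is precisely where the hypotheses $U_0,V_0\in L^\infty(\0,\R)$ and $\sup_k\abs{e_k}_{L^\infty}<\infty$ are consumed. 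With $L^\infty$ stopping times, Markov's inequality plus the uniform bound gives $\PP\{\zeta_n\le T\}\to0$, the consistency $F_n=F_m$ below $\zeta_n\wedge\zeta_m$ is genuine, and both the patching and the continuous dependence (splitting on $\{\zeta_n^{z_1}\wedge\zeta_n^{z_2}>T\}$ and its complement) go through. The rest of your outline --- truncation, contraction mapping, and the positivity argument --- matches the paper.
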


\begin{proof}
In this proof, the letter $c$  denotes positive constants whose values may change in
different occurrences. We will write the dependence of constant on parameters explicitly if it is essential.
First, we rewrite the coefficients by defining
$$f_1(x,u,v)=u\big(a_1(x)-b_1(x)u\big)-\frac{c_1(x)uv}{m_1(x)+m_2(x)u+m_3(x)v},$$
$$f_2(x,u,v)=v\big(-a_2(x)-b_2(x)v\big)+\frac{c_2(x)uv}{m_1(x)+m_2(x)u+m_3(x)v},$$
where $f_i: \0 \times \R\times \R \rightarrow \R$.
For each $n\in\N$, we define
\[f_{n,i}:=
\begin{cases}
f_i(x,u,v)\quad \text{if}\quad\abs{(u,v)}_{\R^2}\leq n,\\
f_{i}\Big(x,\dfrac {nu}{\abs{(u,v)}_{\R^2}},\dfrac {nv}{\abs{(u,v)}_{\R^2}}\Big) \quad \text{if}\quad \abs{(u,v)}_{\R^2}> n.\\
\end{cases}
\]
For each $n$,
$f_n(x,\cdot,\cdot)=\big(f_{n,1}(x,\cdot,\cdot),f_{n,2}(x,\cdot,\cdot)\big):\R^2\rightarrow \R^2$ is Lipschitz continuous,
uniformly with respect to $x\in\0 $, so that the composition operator $F_n(z)$ associated to $f_n$ (with $z(x)=(u(x),v(x))$),
$$F_n(z)(x)=\big(F_{n,1}(z)(x),F_{n,2}(z)(x)\big):=
\big(f_{n,1}(x,z(x)),f_{n,2}(x,z(x))\big), x\in\0,$$
is Lipschitz continuous in both $L^2(\0, \R^2)$ and $L^\infty(\0,\R^2)$.

We proceed to consider the following problem
\begin{equation}\label{Z_n}
dZ_n(t)=\big[AZ_n(t)+F_n(Z_n(t))\big]dt+Z_n(t)dW(t), \quad Z_n(0)=(U_0,V_0),
\end{equation}
where $Z_n(t)=\big(U_n(t),V_n(t)\big)$, $AZ_n(t):=\big(A_1U_n(t),A_2V_n(t)\big)$ and
$$Z_n(t)dW(t):=\big(U_n(t)dW_1(t),V_n(t)dW_2(t)\big).$$
Since the coefficient in \eqref{Z_n} is Lipschitz continuous, by contraction mapping argument (see \cite[Proof of Theorem 3.1]{NY-18} or \cite{Prato}), we obtain that the equation \eqref{Z_n} admits a unique mild solution $Z_n(t)=(U_n(t),V_n(t))\in L^p(\Omega; C([0,T_0],L^2(\0,\R^2)))$ for some sufficiently small $T_0$. Therefore, for any finite $T>0$, there is a unique mild solution of \eqref{Z_n} in $L^p(\Omega; C([0,T],L^2(\0,\R^2)))$. To proceed, we will prove the positivity of $U_n(t),V_n(t).$

\begin{lem}\label{positivity}
For any  initial condition $0\leq U_0,V_0 \in L^\infty(\0,\R)$, $U_n(t),V_n(t)\geq 0,\;\forall t\in[0,T].$
\end{lem}

\begin{proof}
Let $(U_n^*(t),V_n^*(t))$ be the mild solution of the equation
\begin{equation}\label{U_n^*,V_n^*}
\begin{cases}
d
U_n^*(t)=\Big[A_1 U_n^*(t) + F_{n,1}\big(U_n^*(t)\vee 0,V_n^*(t)\vee 0\big)\Big]dt+ \big(U_n^*(t)\vee 0\big)dW_1(t),\\[1ex]
d
V_n^*(t)=\Big[A_2 V_n^*(t) + F_{n,2}\big(U_n^*(t)\vee 0,V_n^*(t)\vee 0\big)\Big]dt+\big(V_n^*(t)\vee 0\big)dW_2(t),\\
U_n^*(0)=U_0,V_n^*(0)=V_0.
\end{cases}
\end{equation}
For $i=1,2$, let $\lambda_i\in \rho(A_i)$, the resolvent set of $A_i$ and $R_i(\lambda_i):=\lambda_i R_i(\lambda_i,A_i)$, with $R_i(\lambda_i,A_i)$ being the resolvent of $A_i$. For each small $\eps>0$, $\lambda=(\lambda_1,\lambda_2)\in \rho(A_1)\times\rho(A_2),$ by \cite[Theorem 1.3.6]{Kailiu}, there exists a unique strong solution $U_{n,\lambda,\eps}(t,x)$, $V_{n,\lambda,\eps}(t,x)$ of the equation
\begin{equation}\label{4.2}
\begin{cases}
\displaystyle
d
U_{n,\lambda,\eps}(t)=\Big[A_1U_{n,\lambda,\eps}(t)+R_1(\lambda_1)F_{n,1}\big(\eps\Phi(\eps^{-1} U_{n,\lambda,\eps}(t)),\eps\Phi(\eps^{-1} V_{n,\lambda,\eps}(t))\big)\Big]dt
\\\quad\quad\quad\quad\quad\quad+R_1(\lambda_1)\eps\Phi(\eps^{-1} U_{n,\lambda,\eps}(t))dW_1(t),\\
\displaystyle
d
V_{n,\lambda,\eps}(t)=\Big[A_2 V_{n,\lambda,\eps}(t)+ R_2(\lambda_2)F_{n,2}\big(\eps\Phi(\eps^{-1} U_{n,\lambda,\eps}(t)),\eps\Phi(\eps^{-1} V_{n,\lambda,\eps}(t))\big)\Big]dt
\\ \quad\quad\quad\quad\quad\quad+R_2(\lambda_2)\eps\Phi(\eps^{-1} V_{n,\lambda,\eps}(t))dW_2(t),\\
U_{n,\lambda,\eps}(0)=R_1(\lambda_1)U_0\;,\; V_{n,\lambda,\eps}(0)=R_2(\lambda_2)V_0,
\end{cases}
\end{equation}
where $\Phi:\R\to\R$ is a function satisfying
\begin{equation*}
\begin{cases}
\Phi\in C^2(\R),\; \Phi(\xi)=0\;\text{if}\;\xi<0\;\text{and}\;\Phi(\xi)\geq 0\;\text{if}\;\xi\geq 0,\\
\eps\Phi(\eps^{-1}\xi)\to \xi\vee 0\quad \text{as}\;\eps\to 0.
\end{cases}
\end{equation*}
For example,
\[
\Phi(\xi)=\begin{cases}0 \quad\text{if}\;\xi\leq 0,\\
3\xi^5-8\xi^4+6\xi^3 \quad\text{if}\;0<\xi< 1,\\
\xi \quad\text{if}\;\xi\geq1.
\end{cases}
\]
Combining with the convergence property in \cite[Theorem 1.3.6]{Kailiu}, we have
$$(U_{n,\lambda_k,\eps}(t),V_{n,\lambda_k,\eps}(t))\to (U_n^{*}(t),V_n^{*}(t)) \ \hbox{ in } L^p(\Omega; C([0,T],L^2(\0,\R^2)))$$ for some sequences $\{\lambda_k\}\subset \rho(A_1)\times\rho(A_2)$ and as $\eps\to 0$.

Now, as in \cite{Tessitore1998}, let
\[
\varphi(\xi)=\begin{cases}\xi^2-\dfrac 16\quad\quad\;\;\;\text{if}\;\xi\leq -1,\\
-\dfrac {\xi^4}2-\dfrac{4\xi^3}{3}\quad\text{if}\;-1<\xi< 0,\\
0 \quad\quad\quad\quad\quad\;\text{if}\;\xi\geq 0.
\end{cases}
\]
Then
$\varphi''(\xi)\geq 0\;\forall \xi$
and $\varphi'(\xi)\Phi(\xi)=\varphi''(\xi)\Phi(\xi)=0\;\forall \xi$. Because $R(\lambda_i,A_i)$ is positivity preserving, by virtue of It\^o's Lemma (\cite[Theorem 3.8]{Curtain}), we obtain
\begin{equation*}
\begin{aligned}
\int_\0 \varphi(U_{n,\lambda,\eps}(t,x))dx&=d_1\int_0^t\int_\0 \varphi'(U_{n,\lambda,\eps}(s,x))\Delta U_{n,\lambda,\eps}(s,x)dxds
\\&=-d_1\int_0^t \int_\0 \varphi''(U_{n,\lambda,\eps}(s,x))\abs{\nabla U_{n,\lambda,\eps}(s,x)}^2dxds
\\&\leq 0.
\end{aligned}
\end{equation*}
Since $\varphi(\xi)>0$ for all $\xi<0$, we conclude that $\forall n\in \N,\eps\geq 0, \lambda\in\rho(A_1)\times\rho(A_2)$
and $U_{n,\lambda,\eps}(t)\geq 0$ for all $t\in [0,T]$. Similarly, we obtain the positivity of $V_{n,\lambda,\eps}(t)$. Hence, $U_n^{*}(t),V_n^{*}(t)\geq 0$ for all $t\in [0,T]$ a.s. Since $(U_n^{*}(t),V_n^{*}(t))$ is the solution of \eqref{U_n^*,V_n^*} and is positive, $U_n^*(t)=U_n(t), V_n^*(t)=V_n(t)$. As a consequence, we obtain the positivity of $U_n(t),V_n(t)$.
\end{proof}

 We are in a position to show that the sequence $\{Z_n\}_{n=1}^{\infty}$ is bounded
by the following lemma.

\begin{lem}\label{lembounded}
For all $n\in\N$ then
\begin{equation}\label{bounded}
\E \sup_{s\in [0,t]}\abs{Z_n(s)}_{L^\infty(\0,\R^2)}^p\leq c_p(t)\big(1+\abs{Z_0}_{L^\infty(\0,\R^2)}\big),
\end{equation}
where $c_{p}(t)$ is a positive constant that may depend on $p,t$ but is independent of $n$.
\end{lem}
\begin{proof}
Without loss of the generality, we need only consider $p$ being sufficient large such that
we can choose simultaneously $\beta,\eps>0$ satisfying
\begin{equation*}
\frac 1p<\beta<\frac 12\quad\text{and}\quad \frac lp<\eps<2\big(\beta-\frac 1p\big).
\end{equation*}
 By the definition of mild solution, we have
\begin{equation*}
\begin{aligned}
U_n(t)(x)&=\left(e^{tA_1}U_0\right)(x)+\left(\int_0^t e^{(t-s)A_1}F_{n,1}(U_n(s),V_n(s))ds\right)(x)+W_{U_n}(t)(x),
\end{aligned}
\end{equation*}
 almost everywhere,
where $W_{U_n}(t)=\int_0^t e^{(t-s)A_1}U_n(s)dW_1(s).$
Thus, since $e^{tA_1}$  is positivity preserving and $U_n(t), V_n(t)$ are positive, by definition of $F_{n,1}$ and \eqref{eta}, we obtain
\begin{equation}\label{U_n^3}
\begin{aligned}
&\abs{U_n(t)}_{L^\infty(\0,\R)}=\ess\sup_{x\in\0} \abs{U_n(t)(x)}=\ess \sup_{x\in\0} U_n(t)(x)
\\&=\ess\sup_{x\in\0}\Big[\left(e^{tA_1}U_0\right)(x)+\left(\int_0^te^{(t-s)A_1}F_{n,1}(U_n(s),V_n(s))ds\right)(x)+W_{U_n}(t)(x)\Big]
\\&\leq \ess\sup_{x\in\0}\Big[\left(e^{tA_1}U_0\right)(x)+\left(\int_0^{t}e^{(t-s)A_1}U_n(s)a_1ds\right)(x)+W_{U_n}(t)(x)\Big]
\\&\leq c(t)\Big(\big|U_0\big|_{L^\infty(\0,\R)}+\int_0^t\Big|U_n(s)\Big|_{L^\infty(\0,\R)}ds
+\Big|W_{U_n}(t)\Big|_{L^\infty(\0,\R)}\Big),
\end{aligned}
\end{equation}
where $c(t)$ is a constant depending only on $t$ and independent of $n$.
By using a factorization argument (see e.g., \cite[Theorem 8.3]{Prato}), we have
\begin{equation*}
W_{U_n}(t)=\dfrac{\sin \pi \beta}{\pi}\int_0^t (t-s)^{\beta-1}e^{(t-s)A_1}Y_{U_n}(s)ds,
\end{equation*}
where
$$Y_{U_n}(s)=\int_0^s (s-r)^{-\beta}e^{(s-r)A_1}U_n(r)dW_1(r).$$
It is easily seen from \eqref{propertyA} and H\"older's inequality that
\begin{equation}\label{lambda}
\begin{aligned}
\abs{W_{U_n}(t)}_{\eps,p}
&\leq c_\beta\int_0^t (t-s)^{\beta-1}\big((t-s)\wedge 1\big)^{-\eps/2}\abs{Y_{U_n}(s)}_{L^p(\0,\R)}ds
\\
& \leq c_{\beta,p}(t)\Big(\int_0^t \big((t-s)\wedge 1\big)^{\frac p{p-1}(\beta-\eps/2-1)}ds\Big)^{\frac{p-1}p}\Big(\int_0^t \abs{Y_{U_n}(s)}_{L^p(\0,\R)}^pds\Big)^{\frac 1p}\\
&\leq c_{\beta,p}(t)\Big(\int_0^t \abs{Y_{U_n}(s)}_{L^p(\0,\R)}^pds\Big)^{\frac 1p},\a.s ,
\end{aligned}
\end{equation}
where $c_{\beta,p}(t)$ is some positive constant, independent of $n$.
On the other hand, for all $s\in [0,t]$, almost every $x\in\0$, we have
\begin{equation*}
Y_{U_n}(s,x)=\int_0^s (s-r)^{-\beta} \sum_{k=1}^\infty \sqrt{\lambda_{k,1}}M_1(s,r,k,x)dB_{k,1}(r),
\end{equation*}
where
$$M_1(s,r,k)=e^{(s-r)A_1}U_n(r)e_k.$$
Therefore, applying the Burkholder inequality, we obtain that for all $s\in [0,t]$, almost every $x\in\0$,
\begin{equation*}
\begin{aligned}
&\!\!\!\E \abs{Y_{U_n}(s,x)}^p\leq c_p\E\Big[\int_0^s(s-r)^{-2\beta}\sum_{k=1}^{\infty}\lambda_{k,1}\abs{M_1(s,r,k,x)}^2dr\Big]^{\frac p2}.
\end{aligned}
\end{equation*}
As a consequence,
\begin{equation}\label{Ybeta}
\begin{aligned}
&\E \int_0^t \abs{Y_{U_n}(s)}_{L^p(\0,\R)}^pds\\
&\quad\leq c_p(t) \int_0^t\E\Big(\int_0^s (s-r)^{-2\beta}\lambda_1\sup_{k\in\N}\abs{M_1(s,r,k)}_{L^{\infty}(\0,\R)}^2dr\Big)^\frac p2ds.
\end{aligned}
\end{equation}
Moreover, since the uniformly boundedness property of $\{e_k\}_{k=1}^\infty$ and \eqref{eta}, we have
\begin{equation}\label{Mr}
\sup_{k\in\N}\abs{M_1(s,r,k)}_{L^\infty(\0,\R)}\leq c\abs{U_n(r)}_{L^\infty(\0,\R)},
\end{equation}
for some constant $c$ independent of $n,s,r,u,v$.
Combining \eqref{Ybeta} and \eqref{Mr} implies that
\begin{equation}\label{boundedYbeta}
\begin{aligned}
\E \int_0^t \abs{Y_{U_n}(s)}_{L^p(\0,\R)}^pds&\leq c_p(t)\int_0^t\E\sup_{r\in [0,s]}\abs{U_n(r)}_{L^\infty(\0,\R)}^p\Big(\int_0^s (s-r)^{-2\beta}dr\Big)^\frac p2ds
\\&\leq c_{\beta,p}(t)\int_0^t\E\sup_{r\in [0,s]}\abs{U_n(r)}_{L^\infty(\0,\R)}^pds.
\end{aligned}
\end{equation}
Since $\eps >l/p$, the Sobolev inequality, \eqref{lambda}, and \eqref{boundedYbeta} imply that
\begin{equation}\label{estimateW}
\begin{aligned}
\E\sup_{s\in[0,t]}\abs{W_{U_n}(s)}_{L^\infty(\0,\R)}^p\leq c_p(t)\int_0^t \E\sup_{r\in[0,s]}\abs{U_n(r)}_{L^\infty(\0,\R)}^pds.
\end{aligned}
\end{equation}
Hence, we obtain form \eqref{U_n^3} and \eqref{estimateW} that
\begin{equation}
\E\sup_{s\in[0,t]}\abs{U_n(s)}^p_{L^\infty(\0,\R)}\leq c_p(t)\Big(\abs{U_0}_{L^\infty(\0,\R)}^p+\int_0^t\E\sup_{r\in [0,s]}\Big|U_n(r)\Big|_{L^\infty(\0,\R)}^pds\Big),
\end{equation}
for some positive constant $c_p(t)$ that is independent of $n$.
Therefore, we obtain from Gronwall's inequality that
$$\E\sup_{s\in[0,t]}\abs{U_n(s)}^p_{L^\infty(\0,\R)}\leq c_{p}(t)\big(1+\abs{U_0}^p_{L^\infty(\0,\R)}\big),$$
for some constant $c_{p}(t)$, is independent of $n$.
Similarly, we have the same estimate for $V_n(t)$.
 Thus the Lemma is proved.
\end{proof}

\para{Completion of the Proof of the Theorem.}
For any $n\in\N$, we define
\begin{equation}\label{tau}
\zeta_n:=\inf\{t\geq 0: \abs{Z_n(t)}_{L^\infty(\0,\R^2)} \geq n\},
\end{equation}
with the usual convention that $\inf \emptyset =\infty$ and define $\zeta=\sup_{n\in\N}\zeta_n.$ Then we have
$$\PP\{\zeta<\infty\}=\lim_{T\to\infty}\PP\{\zeta<T\},$$
and for each $T\geq 0$
$$\PP\{\zeta\leq T\}=\lim_{n\to\infty}\PP\{\zeta_n\leq T\}.$$
For any fixed $n\in\N$ and $T\geq 0,$ it follows from Lemma \ref{lembounded} that
$$
\begin{aligned}
\PP\{\zeta_n\leq T\}=\PP\Big\{\sup_{t\in [0,T]}\abs{Z_n(t)}^p_{L^\infty(\0,\R^2)}\geq n^p\Big\}&\leq \dfrac 1{n^p} \E \sup_{t\in [0,T]}\abs{Z_n(t)}^p_{L^\infty(\0,\R^2)}\\
&\leq \dfrac{c_{p}(T)\big(1+\abs{Z_0}_{L^\infty(\0,\R^2)}\big)}{n^p}.
\end{aligned}
$$
It leads to that $\PP\{\zeta_n\leq T\}$ goes to zero as $n\to\infty$ and we get $\PP\{\zeta=\infty\}=1.$ Hence, for any $t\geq 0$, and $\omega\in \{\zeta=\infty\}$, there exists an $n\in \N$ such that $t\leq \zeta_n(\omega)$.
Thus we can define
$$Z(t)(\omega):=Z_n(t)(\omega).$$
To proceed, we need to show that this definition is well-defined, i.e., for any $t\leq \zeta_n\wedge\zeta_m$ then $Z_n(t)=Z_m(t)$, $\PP$-a.s. For $n<m$ we set $\zeta_{m,n}=\zeta_n\wedge \zeta_m.$ By definition of $F_n, F_m$
$$\text{if}\quad \abs{z}_{L^\infty(\0,\R^2)}\leq n\quad\text{then}\quad F_n(z)(x)=F_m(z)(x)\;\text{almost everywhere in}\;\0.$$
Therefore, we have
\begin{equation*}
\begin{aligned}
&Z_n(t\wedge \zeta_{m,n})-Z_m(t\wedge\zeta_{m,n})
\\&\!=\!\!\!\int_0^{t\wedge \zeta_{m,n}} e^{(t-s)A}\Big(F_n(Z_n(s))-F_m(Z_m(s))\Big)ds+W_{Z_n-Z_m}(t\wedge\zeta_{m,n})
\\&\!=\!\!\!\int_0^t \1_{\{s\leq \zeta_{m,n}\}} e^{(t-s)A}\Big(F_m(Z_n(s\wedge \zeta_{m,n}))-F_m(Z_m(s\wedge\zeta_{m,n}))\Big)ds
+W_{Z_n-Z_m}(t\wedge\zeta_{m,n}),
\end{aligned}
\end{equation*}
where
$$W_{Z_n-Z_m}(t):=\Big(\int_0^{t} e^{(t-s)A_1}(U_n(s)-U_m(s))dW_1(s),\int_0^{t} e^{(t-s)A_2}(V_n(s)-V_m(s))dW_2(s)\Big).$$
Using a similar argument for getting \eqref{estimateW}, we  obtain
\begin{equation}\label{estimateWmn}
\begin{aligned}
\E\sup_{s\in[0,t]}&\abs{W_{Z_n-Z_m}(s\wedge\zeta_{m,n})}^p_{L^\infty(\0,\R^2)}\\
&\leq c_p(t)\int_0^{t}\E\sup_{s'\in[0,s]}\abs{Z_n(s'\wedge\zeta_{m,n})
-Z_m(s'\wedge\zeta_{m,n})}_{L^\infty(\0,\R^2)}^pds.
\end{aligned}
\end{equation}
Therefore, combining with property \eqref{eta}, the Lipschitz continuity of $F_m$, and \eqref{estimateWmn} yields
\begin{equation*}
\begin{aligned}
\E\sup_{s\in [0,t]}&\abs{Z_n(s\wedge \zeta_{m,n})-Z_m(s\wedge\zeta_{m,n})}_{L^\infty(\0,\R^2)}^p
\\&\leq c_{p,m}(t)\int_0^t\E\sup_{s'\in [0,s]}\abs{Z_n(s'\wedge\zeta_{m,n})-Z_m(s'\wedge\zeta_{m,n})}_{L^\infty(\0,\R^2)}^pds.
\end{aligned}
\end{equation*}
Gronwall's inequality implies that $Z_n(t\wedge\zeta_{m,n})=Z_m(t\wedge\zeta_{m,n})\;\forall t$ or
\begin{equation}\label{z-welldefined}
Z_n(t)=Z_m(t),\quad \quad \forall t\leq \zeta_m\wedge\zeta_n.
\end{equation}
It is clear that the process $Z(t)$ defined as above is a mild solution of \eqref{so-vec}. Indeed, for any $t\geq 0$, $\omega \in \{\zeta=\infty\}$ then there exists $n\in\N$ such that $t\leq\zeta_n$ and
\begin{equation*}
\begin{aligned}
Z(t)&=Z_n(t)=e^{tA}Z_0+\int_0^t e^{(t-s)A}F_n(Z_n(s))ds+W_{Z_n}(t)
\\&=e^{tA}Z_0+\int_0^t e^{(t-s)A}F(Z(s))ds+W_Z(t).
\end{aligned}
\end{equation*}
Next, we prove that such solution is unique. If there exists an other solution $\hat Z(t)$ of \eqref{so-vec}. By the argument in the processing of getting \eqref{z-welldefined}, it is possible to obtain
$$Z(t\wedge \zeta_n)=\hat Z(t\wedge \zeta_n),\quad\forall n\in\N,t\geq 0.$$
Since $\zeta_n\to \infty$ as $n\to\infty$ $\PP$-a.s., we get $Z(t)=\hat Z(t).$ Finally, we  show that $Z(t)\in L^p(\Omega; C([0,T],L^2(\0,\R^2)))$. Indeed, for any $p\geq 1, T>0$,
$$
\begin{aligned}
\sup_{t\in[0,T]}\abs{Z(t)}_{L^2(\0,\R^2)}^p&=\lim_{n\to\infty}\sup_{t\in[0,T]}\abs{Z(t)}_{L^2(\0,\R^2)}^p \1_{\{T\leq\zeta_n\}}
\\&=\lim_{n\to\infty}\sup_{t\in[0,T]}\abs{Z_n(t)}_{L^2(\0,\R^2)}^p \1_{\{T\leq\zeta_n\}}.
\end{aligned}
$$
Hence, by the boundedness of $Z_n(t)$ in Lemma \ref{lembounded}, we obtain that the equation \eqref{so} admits a unique mild solution $Z(t)=(U(t),V(t))\in L^p(\Omega; C([0,T],L^2(\0,\R^2)))$. The positivity of $U(t),V(t)$ follow positivity of $U_n(t),V_n(t).$

To complete the proof, we prove that the solution depends continuously on initial data. For  convenience, we use superscripts to indicate the dependence of the solution on initial values. Let $Z^{z_1}(t),Z^{z_2}(t)$ and $Z_n^{z_1}(t),Z_n^{z_2}(t)$ be the solutions of \eqref{so-vec} and \eqref{Z_n} with initial conditions $Z(0)=Z_n(0)=z_1$ and $Z(0)=Z_n(0)=z_2$, respectively. As in the  proof of the first part, since the Lipschitz continuity of $F_n$, it is easy to obtain that
\begin{equation}\label{depend}
\abs{Z_n^{z_1}-Z_n^{z_2}}_{L_{T,p}}^p\leq c_{n,p}(T) \abs{z_1-z_2}_{L^2(\0,\R^2)}^p.
\end{equation}
Consider the stopping times $\zeta_n^{z_1}$ and $\zeta_n^{z_2}$ as in \eqref{tau}, we have
\begin{equation}\label{Zu-Zv}
\begin{aligned}
&\abs{Z^{z_1}-Z^{z_2}}_{L_{T,p}}^p= \E \sup_{s\in[0,T]} \abs{Z^{z_1}(s)-Z^{z_2}(s)}_{L^2(\0,\R^2)}^p\1_{\{\zeta_n^{z_1}\wedge\zeta_n^{z_2}>T\}}
\\&\quad+\E \sup_{s\in[0,T]} \abs{Z^{z_1}(s)-Z^{z_2}(s)}_{L^2(\0,\R^2)}^p\1_{\{\zeta_n^{z_1}\wedge\zeta_n^{z_2}\leq T\}}
\\& \leq \abs{Z_n^{z_1}-Z_n^{z_2}}_{L_{T,p}}^p+ c_p\Big(1+\abs{Z^{z_1}}_{L_{T,2p}}^p+\abs{Z^{z_2}}_{L_{T,2p}}^p\Big)\big(\PP\{\zeta_n^{z_1}\wedge\zeta_n^{z_2}\leq T\}\big)^{1/2}.
\end{aligned}
\end{equation}
Moreover, it follows from \eqref{bounded} that
\begin{equation*}
\begin{aligned}
\PP\{\zeta_n^{z_1}\wedge\zeta_n^{z_2}\leq T\}&\leq \PP\{\sup_{s\in[0,T]}\abs{Z_n^{z_1}(s)}_{L^\infty(\0,\R^2)}\geq n\}\!\!+\!\!\PP\{\sup_{s\in[0,T]}\abs{Z_n^{z_2}(s)}_{L^\infty(\0,\R^2)}\geq n\}
\\&\leq \dfrac {c_4(T)}{n^4}\Big(1+\abs{z_1}^4_{L^\infty(\0,\R^2)}+\abs{z_2}^4_{L^\infty(\0,\R^2)}\Big).
\end{aligned}
\end{equation*}
Therefore, by applying  \eqref{bounded} once more, we obtain from \eqref{Zu-Zv} and \eqref{depend} that
\begin{equation}\label{Z^u-Z^v}
\abs{Z^{z_1}-Z^{z_2}}_{L_{T,p}}^p\leq c_{n,p}(T)\abs{z_1-z_2}_{L^2(\0,\R^2)}^p+\dfrac{c(T)}{n^2}\Big(1+\abs{z_1}_{L^\infty(\0,\R^2)}^{p+2}+\abs{z_2}_{L^\infty(\0,\R^2)}^{p+2}\Big).
\end{equation}
Hence, for any fixed $z_1\in L^\infty(\0,\R^2)$ and $\eps>0$, we first find $\bar n\in\N$ such that
$$\dfrac{c(T)}{\bar n^2}\Big(1+\abs{z_1}_{L^\infty(\0,\R^2)}^{p+2}+\big(1+\abs{z_1}_{L^\infty(\0,\R^2)}\big)^{p+2}\Big)<\dfrac{\eps}2.$$
By determining $0<\delta^*<1$ such that
$$c_{\bar n,p}(T)\abs{z_1-z_2}_{L^2(\0,\R^2)}^p<\dfrac{\eps}2
\quad\text{whenever}\quad\abs{z_1-z_2}_{L^2(\0,\R^2)}<\delta^*,$$
the continuous dependence of the solution on initial values is proved.
\end{proof}

\begin{rem}\label{rm2}{\rm We have following observations
\begin{itemize}
\item[(i)]
As the above proof, we note that the results in Theorem \ref{existence} still hold if we replace the space $L^p(\Omega; C([0,T],L^2(\0,\R^2)))$ by $L^p(\Omega; C([0,T],L^q(\0,\R^2)))$ for $q\geq 2$ (with $q= \infty$ is allowed).
\item[(ii)]
From now, the solution $Z_n(t)$ of \eqref{Z_n} is called as ``truncated solution" of equation \eqref{so-vec}. By the same argument in the processing of obtaining \eqref{Z^u-Z^v}, we conclude that
$$\abs{Z-Z_n}_{L_{t,p}}\leq \dfrac{c_{p,Z_0}(t)}{n^2}\;\text{for some constant}\;c_{p,Z_0}(t)\;\text{being independent of}\;n.$$
As a consequence
$$\lim_{n\to\infty}\abs{Z-Z_n}_{L_{t,p}}=0.$$
\end{itemize}
}
\end{rem}

\section{Sufficient Conditions for Extinction and Permanence}\label{sec:longtime}
In this section, we investigate the
  longtime behavior of  system \eqref{eq1} by providing sufficient conditions for extinction and permanence. Because we can not apply It\^o's formula to the mild solution as usual, it is very difficult to calculate and estimate.  Following our idea in \cite{NY-18}, we approximate the mild solution $(U(t),V(t))$ of \eqref{eq1} by a sequence of strong solutions (see \cite{Prato} for more details about strong solutions, weak solutions, and mild solutions). Consider the following equation
\begin{equation}\label{StrongSolution}
\begin{cases}
d\bar U_n(t,x)=\Big[d_1\Delta \bar U_n(t,x)+\bar U_n(t,x)\Big(a_1(x)-b_1(x)\bar U_n(t,x)\Big)
\\\hspace{5cm}-\dfrac{c_1(x)\bar U_n(t,x)\bar V_n(t,x)}{m_1(x)+m_2(x)\bar U_n(t,x)+m_3(x)\bar V_n(t,x)}\Big]dt
\\\hspace{5cm}+ \displaystyle \sum_{k=1}^n \sqrt{\lambda_{k,1}}e_k(x)\bar U_n(t,x)dB_{k,1}(t)\quad\text{in} \;\;\;\;\R^+\times\0, \\
d\bar V_n(t,x)=\Big[d_2\Delta \bar V_n(t,x)+\bar V_n(t,x)\Big(-a_2(x)-b_2(x)\bar V_n(t,x)\Big)
\\\hspace{5cm}+\dfrac{c_2(x)\bar U_n(t,x)\bar V_n(t,x)}{m_1(x)+m_2(x)\bar U_n(t,x)+m_3(x)\bar V_n(t,x)}\Big]dt
\\\hspace{5cm}+\displaystyle \sum_{k=1}^n \sqrt{\lambda_{k,2}}e_k(x)\bar V_n(t,x)dB_{k,2}(t)\quad\text{in} \;\;\;\;\R^+\times\0,\\
\partial_{\nu}\bar U_n(t,x)=\partial_{\nu}\bar V_n(t,x)=0\quad\quad\quad\quad\quad\;\;\;\;\text{on} \;\;\;\;\R^+\times\partial\0,\\
\bar U_n(x,0)=U_0(x),\bar V_n(x,0)=V_0(x)\quad\quad\quad\;\text{in} \;\;\;\;\0.
\end{cases}
\end{equation}
Denoted by $E$ the Banach space $C(\bar \0,\R^2)$ and by $A_E$ the part of $A=(A_1,A_2)$ in $E$. Since we assumed the domain $\0$ has regular boundary in our boundary condition, $D(A_E)$ is dense in $E$; see \cite[Appendix A.5.2]{Prato}. Moreover, we  use the following notation:
$$\forall u\in C(\bar\0,\R),\quad \abs{u}^*:=\sup_{x\in\bar\0}u(x),\quad \abs{u}_*:=\inf_{x\in\bar \0}u(x),$$
$$\forall u\in L^p(\0,\R),\quad u\geq 0,\quad \abs{u}_p:=\Big(\int_\0 u^p(x)dx\Big)^{1/p},\;p=1,2,\dots$$

\begin{prop}
Assume that for each $k\in \N$,
$e_k\in C^2(\bar\0,\R)$. For any $0\leq (U_0,V_0)\in D(A_E)$, equation \eqref{StrongSolution} has a unique strong solution $\bar Z_n(t)=(\bar U_n(t),\bar V_n(t))$. Moreover, the solution is positive, i.e., $\bar U_n(t), \bar V_n(t)\geq 0$ and for any finite $T>0$, $(\bar U_n(t),\bar V_n(t))\in \L^p(\Omega, C([0,T],E)).$
\end{prop}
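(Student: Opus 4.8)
The plan is to adapt the scheme of Theorem \ref{existence} and Lemma \ref{positivity} to the strong-solution setting in the Banach space $E=C(\bar\0,\R^2)$, taking advantage of the fact that \eqref{StrongSolution} carries only finitely many noise modes. First I would truncate the Beddington--DeAngelis nonlinearity exactly as in the proof of Theorem \ref{existence}: for $m\in\N$ replace $f_i$ by its radial truncation $f_{m,i}$, globally Lipschitz on $\R^2$ uniformly in $x$, and let $F_m$ be the associated composition operator. Because each $e_k\in C^2(\bar\0,\R)$, the multiplication operator $u\mapsto e_k u$ is compatible with the generator $A_E$ (indeed $\Delta(e_k u)=e_k\Delta u+2\nabla e_k\cdot\nabla u+u\Delta e_k$ stays continuous), so the finite-rank diffusion $z\mapsto(\sum_{k\le n}\sqrt{\lambda_{k,1}}e_k u,\ \sum_{k\le n}\sqrt{\lambda_{k,2}}e_k v)$ is admissible for the strong formulation. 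With $0\le Z_0\in D(A_E)$, the hypotheses of \cite[Theorem 1.3.6]{Kailiu} are met and yield a unique strong solution $\bar Z_{n,m}\in L^p(\Omega;C([0,T],E))$ of the truncated equation for every $T>0$, $p\ge1$.

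Next I would establish positivity of $\bar Z_{n,m}$ by reproducing the scheme of Lemma \ref{positivity}. I would pass to the companion equation in which the state is replaced by its positive part in both the drift and the diffusion, regularize it through the resolvents $R_i(\lambda_i)$ and the $C^2$ cut-off $\Phi$ exactly as in \eqref{4.2}, and apply It\^o's formula (\cite[Theorem 3.8]{Curtain}) to $\int_\0\varphi(\cdot)\,dx$ for the penalization $\varphi$ of \cite{Tessitore1998}. The same cancellations as in Lemma \ref{positivity} occur: $\varphi'\Phi=\varphi''\Phi=0$, the positivity-preservation of $R_i(\lambda_i)$, and the identities $f_1(x,0,v)=0$, $f_2(x,u,0)=0$ together remove the noise and reaction contributions, leaving only $-d_i\int_0^t\int_\0\varphi''(\cdot)|\nabla(\cdot)|^2\,dx\,ds\le0$. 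Letting the parameters tend to their limits and using $\varphi>0$ on $(-\infty,0)$ forces $\bar U_{n,m},\bar V_{n,m}\ge0$, and by uniqueness this positive process is the strong solution of the truncated equation.

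Once positivity is in hand, the predator--prey structure yields a uniform a priori bound. Since $U,V\ge0$, the added Beddington--DeAngelis term in the predator equation is dominated linearly, $0\le c_2(x)UV/(m_1+m_2U+m_3V)\le(|c_2|^*/|m_2|_*)V$, while in the prey equation it is subtracted and may be dropped; together with $U(a_1-b_1U)\le|a_1|^*U$ and $-a_2V\le0$ this bounds the drift above by a linear function of the state. As $\bar Z_{n,m}$ is in particular a mild solution, I would then run the factorization, Burkholder, and Gronwall estimate of Lemma \ref{lembounded} (using \eqref{eta}, \eqref{propertyA}, and $\sum_{k\le n}\lambda_{k,i}\le\lambda_i<\infty$) to obtain $\E\sup_{s\in[0,T]}|\bar Z_{n,m}(s)|_E^p\le c_p(T)\big(1+|Z_0|_E\big)$ with $c_p(T)$ independent of $m$ and $n$.

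Finally I would remove the truncation by the stopping-time patching of Theorem \ref{existence}: set $\zeta_m=\inf\{t:|\bar Z_{n,m}(t)|_E\ge m\}$, deduce from the a priori bound that $\PP\{\zeta_m\le T\}\le c_p(T)(1+|Z_0|_E)/m^p\to0$, and use consistency of $\bar Z_{n,m}$ on $\{t\le\zeta_m\}$ to define a global strong solution $\bar Z_n$, with uniqueness inherited from the Lipschitz case, positivity passing from $\bar Z_{n,m}$, and membership in $L^p(\Omega;C([0,T],E))$ following from the bound. I expect the main obstacle to be securing a genuine strong, rather than merely mild, solution: verifying that the $C^2$ hypothesis on the $e_k$ together with $Z_0\in D(A_E)$ really places the multiplicative-noise problem within the scope of \cite[Theorem 1.3.6]{Kailiu} and keeps the paths regular enough that It\^o's formula and the integration by parts in the positivity step are legitimate. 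The nonlinear growth, by contrast, is handled routinely by the truncation-and-patching developed above.
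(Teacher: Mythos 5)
Your scheme (truncate, solve, prove positivity, bound, patch) would deliver a mild solution of \eqref{StrongSolution}, but the heart of this proposition is \emph{strong} solvability, and that is exactly the step your proposal leaves open. The claim that ``with $0\leq Z_0\in D(A_E)$ the hypotheses of \cite[Theorem 1.3.6]{Kailiu} are met'' is not justified: that theorem produces strong solutions only when the drift and the diffusion coefficients take values in $D(A)$ with bounds of the type $\abs{AF(z)}\leq c(1+\abs{Az})$. This is precisely why, in Lemma \ref{positivity}, the coefficients are first composed with the resolvents $R_i(\lambda_i)=\lambda_i R_i(\lambda_i,A_i)$, and why the strong solutions obtained there solve only the regularized equation \eqref{4.2}, converging as the regularization is removed to a \emph{mild} (not strong) solution. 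For your truncated equation neither coefficient maps into $D(A)$: for the noise, $u\in D(A_1)$ gives $\partial_\nu(e_k u)=u\,\partial_\nu e_k$ on $\partial\0$, which need not vanish because $e_k\in C^2(\bar\0,\R)$ does not impose the Neumann condition on $e_k$; for the drift, the Nemytskii composition $f_{m}(x,z(x))$ violates the boundary condition as well, since $a_i,b_i,c_i,m_i$ are arbitrary positive $C^2$ functions. Your closing paragraph flags this as ``the main obstacle,'' but flagging it does not close it: as written, your argument yields at best a finite-noise version of Theorem \ref{existence}, i.e.\ a mild solution, and the positivity, a priori bound, and patching steps (which are themselves sound adaptations of Lemma \ref{positivity}, Lemma \ref{lembounded}, and the stopping-time argument of Theorem \ref{existence}) cannot upgrade it to a strong one.

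The paper closes this gap by a mechanism tailored to finite-dimensional noise, the Da Prato--Tubaro framework \cite{Prato85}: since the $B_k(u,v)=(\sqrt{\lambda_{k,1}}e_k u,\sqrt{\lambda_{k,2}}e_k v)$ are bounded multiplication operators generating mutually commuting semigroups, the SPDE is reduced to a deterministic parabolic equation with random coefficients, and strong solvability comes from analytic semigroup theory in $E$. The work then consists in verifying structural hypotheses: $H_1$, $H_2(a)$, $H_2(b')$, $H_2(c)$ (the latter via \cite[Example 6.31]{Prato}, which is where the $C^2$ regularity of the $e_k$ actually enters); a smoothing property replacing $H_2(e)$, namely $\bar F_E(X^{\theta_1})\subset X^{\theta_2}$ for the fractional-power spaces $X^{\theta}=D\big((-C_E)^{\theta}\big)$, verified through the identification $D_{C_E}(\theta,\infty)=C^{2\theta}(\bar\0,\R^2)$ and the stability of H\"older spaces under products; and the monotonicity condition \eqref{motonic} ($H_2(d')$), which prevents blow-up and is checked only on the positive cone (legitimized by a positivity argument for equation (6) of \cite{Prato85} in the spirit of Lemma \ref{positivity}) with $\eta\geq\max\big\{1+\abs{a_1}^*,\,1+\big|\tfrac{c_2}{m_2}\big|^*\big\}$. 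Note that this last condition plays the role your truncation was meant to play: the superlinear growth is handled by a one-sided dissipativity-type inequality on positive states rather than by cutting the nonlinearity off. To salvage your route you would need a strong-existence theorem applicable to multiplicative multiplication-operator noise without resolvent smoothing, which is exactly what \cite{Prato85} supplies and \cite[Theorem 1.3.6]{Kailiu} does not.
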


\begin{proof}
We apply the results in \cite{Prato85} or \cite[Section 7.4]{Prato} by verifying certain conditions.
Define the following linear operators in $L^2(\0,\R^2)$
$$B_k(u,v):=\Big( \sqrt{\lambda_{k,1}}e_ku,\sqrt{\lambda_{k,2}}e_kv\Big), D(B_k)=L^2(\0,\R^2),1\leq k\leq n,$$
$$C:=A-\dfrac 12\sum_{k=1}^n B_k^2-1,\;\;D(C)=D(A).$$
First, the operators $B_k$ generate mutually commuting semi-groups and all above operators and their restrictions on $E$ generate strongly continuous and analytic semi-groups; see \cite[Appendix A.5.2]{Prato} or \cite[Chapter 2]{Arendt}. As a result, the conditions $H_1,H_2(a),H_2(b')$ in \cite{Prato85} are satisfied. Moreover, by the arguments in \cite[Example 6.31]{Prato}, we can conclude that the condition $H_2(c)$ in \cite{Prato85} is also satisfied.
Second, it follows from \cite[Proof of theorem 2 and Appendix A]{Prato85} or \cite{Acqui} that we can modify the condition $H_2(e)$ in \cite{Prato85} by an alternative one, namely, $\bar F_E(X^{\theta_1})\subset X^{\theta_2}$ for some $\theta_1,\theta_2\in (0,\frac 12),$ where $X^\theta:=D(-C_E)^{\theta}$ is the domain of the fractional power operator $(-C_E)^\theta$, $(-C_E)$ is the part of $(-C)$ in $E$ and $\bar F_E$ is the part of $\bar F$ in $E$
$$\bar F(\bar U,\bar V)\!\!=\!\!\Big[\bar U(1+a_1-b_1\bar U)\!-\!\dfrac{c_1\bar U\bar V}{m_1+m_2\bar U+m_3\bar V},\bar V(1-a_2-b_2\bar V)+\dfrac{c_2\bar U\bar V}{m_1+m_2\bar U+m_3\bar V}\Big].$$
By \cite[Proposition A.13]{Prato}, we have for all $\theta_1>\theta_2\in (0,1)$
$$D\big((-C_E)^{\theta_1}\big)\subset D_{C_E}(\theta_1,\infty)\subset D\big((-C_E)^{\theta_2}\big),$$
where $D_{C_E}(\theta_1,\infty)$ is defined as in \cite[Appendix A]{Prato} and by \cite[Appendix A.5.2, p. 399]{Prato}
$$D_{C_E}(\theta_1,\infty)=C^{2\theta_1}(\bar\0,\R^2)\;\text{if}\;\theta_1\in (0,\frac 12),$$
where $C^{2\theta_1}(\bar\0,\R^2)$ is a H\"older's space.
Since the space $C^{2\theta_1}(\bar \0,\R^2)$ satisfies that $u,v\in C^{2\theta_1}(\bar\0,\R^2)$ implies $uv\in C^{2\theta_1}(\bar\0,\R^2)$, we obtain that $\bar F_E(X^{\theta_1})\subset X^{\theta_2}$ for some $\theta_2<\theta_1\in (0,\frac 12)$. Finally, it is needed to verify the monotonicity type hypothesis $H_2(d')$, namely, there exists $\eta\in\R$ such that for any $\alpha>0$, $s\in \R$ and $\bar Z=(\bar U,\bar V)\in E$ then
\begin{equation}\label{motonic}
\abs{\bar Z}_E\leq \abs{\bar Z-\alpha\big(e^{-Bs}\bar F_E(e^{Bs}\bar Z)-\eta\bar Z\big)}_E,\text{where}\;B=\sum_{k=1}^nB_k.
\end{equation}
It follows from \cite[Proof of Theorem 2]{Prato85},
this condition is needed to guarantee the strict
solution of abstract problem (6) in \cite{Prato85} does not explode in finite time.
Although reference \cite{Prato85} only focused on the existence and uniqueness of the strict solution of equation (6),
substituting coefficients in the system we are considering into (6) of \cite{Prato85}, a similar proof as in Lemma \ref{positivity} leads to the positivity for the solution of (6).
Hence, we need only verify the condition \eqref{motonic} for $\bar Z=(\bar U,\bar V)\in E$ with $\bar U(x),\bar V(x)\geq 0$ almost everywhere in $\0$ or $\bar U(x),\bar V(x)\geq 0\;\forall x\in \bar \0.$ As a consequence, by choosing $\eta\geq \max\big\{1+\abs{a_1}^*,1+\big|\frac{c_2}{m_2}\big|^*\big\}$, \eqref{motonic} is clearly satisfied. Therefore, the existence and uniqueness of strong solution are obtained by applying the results in \cite{Prato85}.
It is similar to Lemma \ref{lembounded}, we have for any finite $T>0$, $p\geq 1$,
$$(\bar U_n(t),\bar V_n(t))\in L^p(\Omega, C([0,T],E)).$$
\end{proof}

\begin{prop}\label{convergence}
For any $t\geq 0$, $p\geq 2$ and non-negative initial data $(U_0,V_0)\in D(A_E)$, we have
\begin{equation}\label{convergeS}
\lim_{n\to\infty}\E\abs{U(t)-\bar U_n(t)}_{L^2(\0,\R)}^p=0,
\end{equation}
and
\begin{equation}\label{convergeI}
\lim_{n\to\infty}\E\abs{V(t)-\bar V_n(t)}_{L^2(\0,\R)}^p=0,
\end{equation}
where $Z(t)=(U(t),V(t))$ is the mild solution of \eqref{so-vec} and $\bar Z_n(t)=(\bar U_n(t),\bar V_n(t))$ is the strong solution of \eqref{StrongSolution}.
\end{prop}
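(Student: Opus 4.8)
The plan is to use the fact that the strong solution $\bar Z_n$ is, in particular, a mild solution, so that both $Z$ and $\bar Z_n$ satisfy the integral equation \eqref{so-vec}, the only discrepancy being that $\bar Z_n$ is driven by the truncated noise $W^{(n)}_i(t)=\sum_{k=1}^n\sqrt{\lambda_{k,i}}B_{k,i}(t)e_k$ in place of the full noise $W_i(t)$. Subtracting the two mild formulations, the difference solves
\begin{equation*}
Z(t)-\bar Z_n(t)=\int_0^t e^{(t-s)A}\big(F(Z(s))-F(\bar Z_n(s))\big)ds+R_n(t),
\end{equation*}
where, after adding and subtracting $\int_0^t e^{(t-s)A}Z(s)dW^{(n)}(s)$, the stochastic remainder splits into a ``Gronwall part'' $\int_0^t e^{(t-s)A}(Z(s)-\bar Z_n(s))dW^{(n)}(s)$ and a ``tail part'' $\int_0^t e^{(t-s)A}Z(s)d(W-W^{(n)})(s)$.

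Since $F$ is only locally Lipschitz, I would localize. Both $\E\sup_{s\leq t}\abs{Z(s)}_{L^\infty(\0,\R^2)}^p$ and, by the analogue of Lemma \ref{lembounded} established for the strong solutions in the previous proposition, $\E\sup_{s\leq t}\abs{\bar Z_n(s)}_{L^\infty(\0,\R^2)}^p$ are bounded by a constant independent of $n$. Define the stopping times $\rho_m^n:=\inf\{t\geq 0:\abs{Z(t)}_{L^\infty(\0,\R^2)}\geq m\ \text{or}\ \abs{\bar Z_n(t)}_{L^\infty(\0,\R^2)}\geq m\}$. On $[0,\rho_m^n]$ both processes are bounded by $m$ in $L^\infty$, and since the denominator $m_1(x)+m_2(x)u+m_3(x)v$ stays bounded below by $\inf_{x}m_1(x)>0$, the composition operator $F$ is Lipschitz on this region as a map into $L^2(\0,\R^2)$ with a constant $L_m$ depending only on $m$. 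As in the derivation of \eqref{z-welldefined}, I would work with the stopped difference by inserting the indicator $\1_{\{s\leq\rho_m^n\}}$ into the convolutions.

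Applying the factorization/Burkholder estimates exactly as in Lemma \ref{lembounded}, the stopped Gronwall part is controlled by $c_p(t)\int_0^t\E\sup_{r\leq s}\abs{Z(r\wedge\rho_m^n)-\bar Z_n(r\wedge\rho_m^n)}^p_{L^2(\0,\R^2)}ds$ (the constant uniform in $n$, because $\sum_{k=1}^n\lambda_{k,i}\leq\lambda_i$), while the tail part, treated the same way but retaining the factor $\sum_{k>n}\lambda_{k,i}$, obeys
\begin{equation*}
\E\sup_{s\leq t}\Big|\int_0^s e^{(s-r)A_1}U(r)\,d\big(W_1-W_1^{(n)}\big)(r)\Big|_{L^2(\0,\R)}^p\leq c_p(t)\Big(\sum_{k=n+1}^{\infty}\lambda_{k,1}\Big)^{p/2}\E\sup_{r\leq t}\abs{U(r)}_{L^\infty(\0,\R)}^p,
\end{equation*}
which tends to $0$ as $n\to\infty$ by the nuclearity condition \eqref{nuclearcondition} together with the uniform moment bound from Lemma \ref{lembounded}. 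Combining this with the local Lipschitz estimate for the drift, Gronwall's inequality yields, for each fixed $m$, that $\E\sup_{s\leq t}\abs{Z(s\wedge\rho_m^n)-\bar Z_n(s\wedge\rho_m^n)}_{L^2(\0,\R^2)}^p\to 0$ as $n\to\infty$. (As in Lemma \ref{lembounded}, one may take $p$ sufficiently large without loss of generality, the smaller exponents following by Jensen's inequality on $\Omega$.)

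To remove the localization I would split $\E\abs{Z(t)-\bar Z_n(t)}^p_{L^2(\0,\R^2)}$ over the events $\{\rho_m^n>t\}$ and $\{\rho_m^n\leq t\}$, exactly as in \eqref{Zu-Zv}: on the first event the difference coincides with the stopped difference just shown to vanish, while on the second, Cauchy--Schwarz combined with the uniform $L^{2p}$ bounds and the Chebyshev estimate $\PP\{\rho_m^n\leq t\}\leq c\,m^{-p}$ (uniform in $n$) controls the contribution. Choosing $m$ large first and then letting $n\to\infty$ gives \eqref{convergeS}, and the argument for \eqref{convergeI} is identical. The main obstacle is the interplay of the two limits: the Gronwall constant $L_m$ grows with the localization radius $m$, so one must send $n\to\infty$ to kill the noise tail \emph{before} letting $m\to\infty$, and one must verify that the uniform-in-$n$ moment bounds for both $Z$ and $\bar Z_n$ render the tail probability $\PP\{\rho_m^n\leq t\}$ small uniformly in $n$.
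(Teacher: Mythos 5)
Your proposal is correct, and its core is exactly the paper's argument: write the strong solution $\bar Z_n$ in mild form, split the difference of stochastic convolutions into a Gronwall part plus a spectral-tail part, estimate both with the factorization/Burkholder machinery of Lemma \ref{lembounded}, and close with Gronwall's inequality together with the nuclearity condition \eqref{nuclearcondition}. Where you genuinely differ is the treatment of the non-Lipschitz drift. The paper invokes Remark \ref{rm2}(ii): it approximates both $Z$ and $\bar Z_n$ by their truncated solutions, with an error that is quantitative (of order $n^{-2}$ in the truncation level) and, crucially, uniform in the noise-truncation index, and can therefore assume without loss of generality that $F$ is globally Lipschitz before running the Gronwall argument a single time. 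You instead localize with the stopping times $\rho_m^n$, run the Gronwall argument with the $m$-dependent Lipschitz constant $L_m$, and then remove the localization by splitting over $\{\rho_m^n\leq t\}$ via Cauchy--Schwarz and Chebyshev exactly as in \eqref{Zu-Zv}, fixing $m$ large first and then sending $n\to\infty$. Both routes work; the paper's reduction is cleaner in that no interchange of limits has to be managed, while yours is more self-contained (it does not need the truncated-solution convergence of Remark \ref{rm2}(ii) to be re-established for the strong solutions) at the price of the two-parameter limit, whose ordering you handle correctly. One point you leave implicit: the strong solutions $\bar Z_n$ are constructed under the assumption $e_k\in C^2(\bar\0,\R)$, and the paper's proof spends its final paragraph removing this hypothesis by density of $C^{\infty}(\bar\0,\R)$ in $L^2(\0,\R)$; your argument tacitly works under that assumption, which is harmless since the statement presupposes the strong solutions exist, but deserves a sentence.
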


\begin{proof}
In this proof, the letter $c$ still denotes positive constants whose values may change in
different occurrences. We will write the dependence of constant on parameters explicitly if it is essential.
It is similar to Lemma \ref{lembounded}, we can obtain
$$\E\sup_{s\in[0,t]}\abs{Z_n(s)}_{L^\infty(\0,\R^2)}^p\leq c_{p,Z_0}(t)\ \text{for some constant }\ c_{p,Z_0}(t),\ \text{ is independent of }\ n.$$
Therefore, as in  part (ii) of Remark \ref{rm2}, we also obtain a similar convergence for the solution $Z_n(t)$ of \eqref{StrongSolution} and their truncated solutions. Moreover, this convergence is uniform with respect to $n$.
So, without loss of the generality, we can assume the non-linear term $F$ is Lipschitz continuous in this proof since we can approximate solutions of \eqref{so-vec} and \eqref{StrongSolution} by their truncated solutions.

 First, we still assume that each
 $k\in \N$, $e_k\in C^2(\bar\0,\R).$  Because a strong solution is also a mild one, we have
\begin{equation}\label{Z_n-vec}
\displaystyle \bar Z_n(t)=e^{tA}Z_0+\int_0^t e^{(t-s)A}F(\bar Z_n(s))ds+W_{\bar Z_n}(t), \;Z_0=(U_0,V_0),
\end{equation}
where $W_{\bar Z_n}(t)=(W_{\bar U_n}(t), W_{\bar V_n}(t))$ and
$$W_{\bar U_n}(t)=\sum_{k=1}^n\lambda_{k,1}\int_0^t e^{(t-s)A_1}\bar U_n(s)dB_{k,1}(s),$$
$$W_{\bar V_n}(t)=\sum_{k=1}^n\lambda_{k,2}\int_0^t e^{(t-s)A_2}\bar V_n(s)dB_{k,2}(s).$$
By the same argument as in the processing of getting \eqref{estimateW}, we obtain
\begin{equation}\label{estimateWZ_n}
\abs{W_Z-W_{\bar Z_n}}_{L_{t,p}}\leq c_p(t)\int_0^t\abs{Z-\bar Z_n}_{L_{s,p}}ds+c_p(t)\sum_{k=n}^\infty(\lambda_{k,1}+\lambda_{k,2})\abs{Z}_{L_{t,p}}.
\end{equation}
Subtracting \eqref{so-vec} side-by-side from \eqref{Z_n-vec} and applying \eqref{bounded}, \eqref{estimateWZ_n} allows us to get
\begin{equation*}
\abs{Z-\bar Z_n}_{L_{t,p}}\leq c_{p,Z_0}(t)\Big(\sum_{k=n}^\infty(\lambda_{k,1}+\lambda_{k,2})+\int_0^t\abs{Z-\bar Z_n}_{L_{s,p}}ds\Big),
\end{equation*}
for some constant $c_{p,Z_0}(t)$ independent of $n$. Hence, it follows from  Gronwall's inequality that
\begin{equation}\label{Z-Z_n}
\abs{Z-\bar Z_n}_{L_{t,p}}\leq c_{p,Z_0}(t)\Big[\sum_{k=n}^\infty(\lambda_{k,1}+\lambda_{k,2})\Big].
\end{equation}
By
\eqref{nuclearcondition}, it is seen that
\begin{equation}\label{sumtoinfty}
\lim_{n\to\infty}\sum_{k=n}^\infty(\lambda_{k,1}+\lambda_{k,2})=0.
\end{equation}
Thus, we obtain from \eqref{Z-Z_n} and \eqref{sumtoinfty} that
$$\lim_{n\to\infty}\abs{Z-\bar Z_n}_{L_{t,p}}=0.$$
As a consequence, for all $t\geq 0, p\geq 2$
$$\lim_{n\to\infty}\E\abs{Z(t)-\bar Z_n(t)}^p_{L^2(\0,\R^2)}=0.$$
Now, as the above proof, by the fact $C^{\infty}(\bar\0,\R)$ is dense in $L^2(\0,\R)$, we can  remove
the condition $e_k\in C^2(\bar\0,\R)$. To be more detailed, we will first approximate the mild solution of \eqref{so-vec} by a sequence of mild solutions of \eqref{StrongSolution} without the condition $e_k\in C^2(\bar\0,\R)$ and then these solutions are approximated by the strong solutions of \eqref{StrongSolution} with condition $e_k\in C^2(\bar\0,\R)\;\forall 1\leq k\leq n$. Therefore, from now, to simplify the notation, we will approximate directly the mild solution of \eqref{so-vec} by the strong solutions of \eqref{StrongSolution}. Equivalently, without loss of the generality, we may assume that $e_k\in C^2(\bar \0,\R)\;\forall k=1,2,...$
as far as the approximation is concerned.
\end{proof}

\begin{rem}{\rm Combining Remark \ref{rm2} and the above proof, the convergence in Proposition \ref{convergence} still holds in the space $L^p(\Omega; C([0,T],L^\infty(\0,\R^2)))$. In more details, by the same arguments, it is possible to obtain that for any finite $T>0$, $p\geq 1$
$$\lim_{n\to\infty}\E \sup_{s\in [0,T]}\abs{Z(s)-\bar Z_n(s)}_{L^\infty(\0,\R^2)}^p=0.$$
}
\end{rem}

Now, for each $m\in \N, m>\big|\frac1{U_0}\big|_{L^{\infty}(\0,\R)}, \big|\frac1{V_0}\big|_{L^{\infty}(\0,\R)}$ (if they are finite), let
$$\tau_m^n=\inf\Big\{t\geq 0: \text{there exists}\;1\leq \bar p< \infty\;\text{such that}\;\int_\0\dfrac 1{\bar U_n^p(t,x)}dx\geq m^{p^2} \;\forall p\geq\bar p\Big\},$$
$$\eta_m^n=\inf\Big\{t\geq 0: \text{there exists}\;1\leq \bar p< \infty\;\text{such that}\;\int_\0\dfrac 1{\bar V_n^p(t,x)}dx\geq m^{p^2} \;\forall p\geq\bar p\Big\}.$$
It is easy to see that for any fix $n\in\N$, the sequences $\tau^n_m$ and $\eta^n_m$ are increasing in $m.$ Hence, we can define
$$\tau_\infty^n:=\lim_{m\to\infty}\tau_m^n\;, \ \eta_\infty^n:=\lim_{m\to\infty}\eta_m^n.$$

\begin{lem}
If $\big|\frac 1{U_0}\big|_{L^{\infty}(\0,\R)}<\infty$ and $\big|\frac 1{V_0}\big|_{L^{\infty}(\0,\R)}<\infty$, then for all $n\in\N$, $\tau_\infty^n=\eta_\infty^n=\infty$\a.s
\end{lem}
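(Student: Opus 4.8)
The plan is to control the negative-power moments $G_p(t):=\int_\0 \bar U_n^{-p}(t,x)\,dx$ of the strong solution and to exploit the fact that the event defining $\tau_m^n$ forces these to exceed $m^{p^2}$ for all large $p$, which the moment estimates will rule out once $m$ is large. Since the strong solution lives in $C([0,T],E)$ with $E=C(\bar\0,\R^2)$ and is strictly positive before it can touch zero, the map $t\mapsto\abs{\bar U_n(t)}_*$ is continuous and $\bar U_n$ stays bounded below by a positive (random) constant on every interval $[0,t_1]$ preceding its first zero; hence $\bar U_n^{-p}$ is a legitimate argument for It\^o's formula up to a localizing stopping time (routine, and suppressed below). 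I would first apply It\^o's formula (in the spirit of the proof of Lemma \ref{positivity}) to $\bar U_n^{-p}$ and integrate over $\0$.

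The heart of the matter is the drift of $dG_p$. Expanding $d(\bar U_n^{-p})=-p\bar U_n^{-p-1}d\bar U_n+\tfrac12 p(p+1)\bar U_n^{-p-2}(d\bar U_n)^2$ and integrating, Green's identity with the Neumann condition turns the Laplacian contribution into $-p d_1(p+1)\int_\0 \bar U_n^{-p-2}\abs{\nabla\bar U_n}^2dx\le 0$, which I discard; the intrinsic term $-p\int_\0 a_1\bar U_n^{-p}dx\le 0$ is also favorable. The self-competition term $p\int_\0 b_1\bar U_n^{-(p-1)}dx$ is, by Young's inequality, at most $\abs{b_1}^*\big((p-1)G_p+1\big)$; the predation term is $p\int_\0\frac{c_1\bar V_n}{m_1+m_2\bar U_n+m_3\bar V_n}\bar U_n^{-p}dx\le p\abs{c_1/m_3}^*G_p$ because $\frac{c_1\bar V_n}{m_1+m_2\bar U_n+m_3\bar V_n}\le \frac{c_1}{m_3}$; and the It\^o correction from the noise is $\tfrac12 p(p+1)\sum_{k}\lambda_{k,1}\int_\0 e_k^2\bar U_n^{-p}dx\le \tfrac12 p(p+1)C_0^2\lambda_1 G_p$. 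Collecting these — and this is the decisive point — the coefficient of $G_p$ is bounded by $c_* p^2$ with $c_*$ depending only on the coefficients, $C_0$ and $\lambda_1$, but \emph{not} on $p$ or $m$:
\begin{equation*}
dG_p\le\big(c_* p^2 G_p+c_{**}\big)\,dt+dM_p,
\end{equation*}
where $M_p$ is the martingale arising from $-p\sum_k\sqrt{\lambda_{k,1}}\int_\0 e_k\bar U_n^{-p}dx\,dB_{k,1}$.

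Stopping at $\tau_m^n$ (with the localizer keeping $\bar U_n$ bounded below) makes $M_p$ a true martingale, so taking expectations and applying Gronwall's inequality yields, for $t\le T$,
\begin{equation*}
\E\,G_p(t\wedge\tau_m^n)\le\big(G_p(0)+c_{**}t\big)e^{c_* p^2 t}\le \big(K^p+c_{**}T\big)e^{c_* p^2 T},
\end{equation*}
with $K:=\abs{1/U_0}_{L^\infty(\0,\R)}<\infty$ by hypothesis (so $G_p(0)=\int_\0 U_0^{-p}dx\le K^p$ since $\abs{\0}=1$), and $c_*,c_{**}$ independent of $p,m$. On $\{\tau_m^n\le T\}$ the definition of $\tau_m^n$ forces $G_p(T\wedge\tau_m^n)=G_p(\tau_m^n)\ge m^{p^2}$ for all $p\ge\bar p$, so $\{\tau_m^n\le T\}\subset\liminf_{p}\{G_p(T\wedge\tau_m^n)\ge m^{p^2}\}$; Fatou's lemma for events together with Markov's inequality then gives
\begin{equation*}
\PP\{\tau_m^n\le T\}\le\liminf_{p\to\infty}\frac{\E\,G_p(T\wedge\tau_m^n)}{m^{p^2}}\le\liminf_{p\to\infty}\big(K^p+c_{**}T\big)\Big(\frac{e^{c_* T}}{m}\Big)^{p^2}.
\end{equation*}
Once $m>e^{c_* T}$ the base of the $p^2$-th power is $<1$ and dominates the merely exponential factor $K^p$, so the $\liminf$ is $0$; hence $\PP\{\tau_m^n\le T\}=0$ for all large $m$, whence $\tau_\infty^n>T$ a.s., and since $T$ is arbitrary, $\tau_\infty^n=\infty$ a.s. The argument for $\eta_\infty^n$ is identical with $\bar V_n$ and $\abs{1/V_0}_{L^\infty(\0,\R)}<\infty$; the only change is that the death rate produces $+p\int_\0 a_2\bar V_n^{-p}dx$ (still only order $p$ times the corresponding moment), while the predation term now carries a favorable sign and is discarded.

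The main obstacle is concentrated in the drift estimate and is twofold: one must check that the coefficient multiplying $G_p$ grows no faster than $c_* p^2$ with $c_*$ genuinely independent of $p$ — this relies on the uniform bound $C_0$ for the $e_k$ and the nuclear condition $\sum_k\lambda_{k,1}<\infty$, which keep the It\^o-correction term at order $p^2G_p$ rather than worse — and this $p^2$ growth must then be matched against the threshold $m^{p^2}$ built into $\tau_m^n$, so that the single super-exponentially small factor $(e^{c_*T}/m)^{p^2}$ closes the argument. The localization legitimizing It\^o's formula for the singular function $\bar U_n^{-p}$ as $\bar U_n\to 0$ is routine but should be spelled out.
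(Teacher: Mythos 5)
Your proof is correct and follows essentially the same route as the paper: It\^o's formula applied to the negative moments $\int_\0\bar U_n^{-p}\,dx$, a Gronwall bound whose exponent grows like $c_*p^2$ (with $c_*$ independent of $p$, coming from the It\^o correction $\tfrac12 p(p+1)\lambda_1C_0^2$), and a comparison of $e^{c_*p^2T}$ against the $m^{p^2}$ threshold built into $\tau_m^n$. The only difference is cosmetic: the paper argues by contradiction, bounding $\sup_{p\geq 1}\big[\E\int_\0\bar U_n^{-p}(T_0\wedge\tau_m^{n})\,dx\big]^{1/p^2}$ above by a finite $M(T_0)$ and below by $m\eps_0$ on the bad event, whereas you bound $\PP\{\tau_m^n\leq T\}$ directly via Markov's inequality and Fatou's lemma for events --- the same mechanism, packaged as a direct estimate rather than a contradiction.
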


\begin{proof}
First, we prove that $\forall n\in\N,\;\tau_\infty^n=\infty\a.s$
Indeed, if this statement is false then there exist $n_0\in \N$ and two constants $T_0>0$ and $\eps_0\in(0,1)$ such that
$$\PP\{\tau_\infty^{n_0}\leq T_0\}>\eps_0.$$
Therefore, there is an integer $m_0$ such that $\PP(\Omega_0^m)\geq \eps_0, \;\forall m\geq m_0$, where
\begin{equation}\label{tau_m}
\Omega_0^m:=\{\tau_m^{n_0}\leq T_0\}.
\end{equation}
Using It\^o's Lemma (\cite[Theorem 3.8]{Curtain}) and by direct calculations, we have
\begin{equation}\label{1/s^2}
\begin{aligned}
&\!\!\!\int_\0 \dfrac 1{\bar U^p_n(t\wedge\tau_m^n,x)}dx \\ & \ =\int_\0\dfrac 1{U_0^p(x)}dx+\int_0^{t\wedge\tau_m^n}\int_\0\dfrac {-p}{\bar U^{p+1}_n(s,x)}\\
&\qquad\times \Big(d_1\Delta \bar U_n(s,x)+\bar U_n(s,x)\big(a_1(x)-b_1(x)\bar U_n(s,x)\big)
\\&\qquad-\dfrac{c_1(x)\bar U_n(s,x)\bar V_n(s,x)}{m_1(x)+m_2(x)\bar U_n(s,x)+m_3(x)\bar V_n(s,x)}\Big)dxds
\\&\qquad+\dfrac 12 \int_0^{t\wedge\tau_m^n}\sum_{k=1}^n\int_\0 \dfrac{p(p+1)\lambda_{k,1}e^2_k(x)\bar U_n^2(s,x)}{\bar U_n^{p+2}(s,x)}dxds\\
&\qquad +\sum_{k=1}^n\int_0^{t\wedge\tau_m^n}\Big[\sqrt{\lambda_{k,1}}\int_\0 \dfrac{-pe_k(x)\bar U_n(s,x)}{\bar U_n^{p+1}(s,x)}dx\Big]dB_{k,1}(s)
\\&\leq \int_\0\dfrac 1{U_0^p(x)}dx+\int_0^{t\wedge\tau_m^n}\int_\0\dfrac{-pd_1\Delta\bar U_n(s,x)}{\bar U_n^{p+1}(s,x)}dxds
\\&\qquad+\int_0^{t\wedge\tau_m^n}\int_\0 \dfrac{p}{\bar U_n^{p}(s,x)}\Big(K_1+pK_2+\abs{b_1}^*\bar U_n(s,x)\Big)dxds
\\&\qquad+\sum_{k=1}^n\int_0^{t\wedge\tau_m^n}\Big[\sqrt{\lambda_{k,1}}\int_\0 \dfrac{-pe_k(x)}{\bar U_n^p(s,x)}dx\Big]dB_{k,1}(s)
\\&\leq \int_\0\dfrac 1{U_0^p(x)}dx+\int_0^{t\wedge\tau_m^n} \Big(K_3(p)+\int_\0 \dfrac{K_3(p)}{\bar U_n^{p}(s,x)}dx\Big)ds
\\&\qquad+\sum_{k=1}^n\int_0^{t\wedge\tau_m^n}\Big[\sqrt{\lambda_{k,1}}\int_\0 \dfrac{-pe_k(x)}{\bar U_n^p(s,x)}dx\Big]dB_{k,1}(s),\;\forall p\geq 1, t\geq 0,n\in\N,
\end{aligned}
\end{equation}
where $K_1=\big|\frac{c_1}{m_3}\big|^*+\frac{\lambda_1C_0^2}2$, $K_2=\frac{\lambda_1C_0^2}2$ and $K_3(p)=p\big(K_1+pK_2+\abs{b_1}^*\big).$
In the above, we used the following facts
$$\int_\0\dfrac{-pd_1\Delta\bar U_n(s,x)}{\bar U_n^{p+1}(s,x)}dx=-p(p+1)d_1\int_\0\dfrac{\abs{\nabla \bar U_n(s,x)}^2}{\bar U_n^{p+2}(s,x)}dx\leq 0 \ \hbox{ a.s.,}$$
and
\begin{equation*}
\begin{aligned}
\int_\0& \dfrac{p}{\bar U_n^{p}(s,x)}\Big(K_1+pK_2+\abs{b_1}^*\bar U_n(s,x)\Big)dx
\\&=\int_\0 \dfrac{p}{\bar U_n^{p}(s,x)}\Big(K_1+pK_2+\abs{b_1}^*\bar U_n(s,x)\Big)\1_{\{U(s,x)\geq 1\}}dx
\\&\quad+\int_\0 \dfrac{p}{\bar U_n^{p}(s,x)}\Big(K_1+pK_2+\abs{b_1}^*\bar U_n(s,x)\Big)\1_{\{U(s,x)\leq 1\}}dx
\\&\leq p\big(K_1+pK_2+\abs{b_1}^*\big)+\int_\0 \dfrac{p\big(K_1+pK_2+\abs{b_1}^*\big)}{\bar U_n^{p}(s,x)}dx\a.s
\end{aligned}
\end{equation*}
Hence, \eqref{1/s^2} leads to
\begin{equation*}
\E\int_\0 \dfrac 1{\bar U^p_n(t\wedge\tau_m^n,x)}dx\leq \int_\0\dfrac 1{U_0^p(x)}dx+tK_3(p)+K_3(p)\int_0^{t} \E\int_\0 \dfrac{1}{\bar U_n^{p}(s\wedge\tau_m^n,x)}dxds.
\end{equation*}
Thus, Gronwall's inequality implies that
\begin{equation*}
\E\int_\0 \dfrac 1{\bar U^p_n(t\wedge\tau_m^n,x)}dx\leq \Big[\int_\0\dfrac 1{U_0^p(x)}dx+tK_3(p)\Big]e^{tK_3(p)}.
\end{equation*}
Therefore, for each fixed $t\geq 0$ and $\forall n\in \N$,
\begin{equation*}\label{e1/s^p}
\begin{aligned}
\sup_{p\geq 1}\Big[\E&\int_\0 \dfrac 1{\bar U^{p}_n(t\wedge\tau_m^n,x)}dx\Big]^{1/p^2}
\\&\leq \sup_{p\geq 1} \Big[\int_\0\dfrac 1{U_0^p(x)}dx+\big(pK_1+p\abs{b_1}^*+p^2K_2\big)
t\Big]^{1/p^2}e^{t(\frac{K_1+\abs{b_1}^*}p+K_2)}\\
&:=M(t)<\infty.
\end{aligned}
\end{equation*}
In particular,
\begin{equation}\label{47}
 \sup_{p\geq 1}\Big[\E\int_\0 \dfrac 1{\bar U^p_n(T_0\wedge\tau_m^n,x)}dx\Big]^{1/p^2}\leq M(T_0)\;\forall n\in \N.
\end{equation}
On the other hand, for all $m\geq m_0$,
\begin{equation}\label{48}
\begin{aligned}
\sup_{p\geq 1} \Big[\E\int_\0 \dfrac 1{\bar U^p_{n_0}(T_0\wedge\tau_m^{n_0},x)}dx\Big]^{1/p^2}&\geq\sup_{p\geq 1}\Big[\E\1_{\Omega_0^m} \int_\0 \dfrac 1{\bar U^p_{n_0}(T_0\wedge\tau_m^{n_0},x)}dx\Big]^{1/p^2}
\\&\geq m\eps_0.
\end{aligned}
\end{equation}
We deduce from \eqref{47} and \eqref{48} that
$$M(T_0)\geq m\eps_0\quad\forall m\geq m_0.$$
This is a contradiction when $m\to\infty$. Therefore
\begin{equation*}
\tau_\infty^n=\infty\a.s\;\forall n\in\N.
\end{equation*}
Similarly, we obtain that
$$\eta_\infty^n=\infty\a.s\;\forall n\in\N.$$
\end{proof}

To proceed, we introduce following numbers
$$H_0=\Big|a_1-\frac{c_1}{m_3}\Big|_*-\dfrac{3\lambda_1C_0^2}{2}:=\inf_{x\in\bar \0}\Big[a_1(x)-\dfrac{c_1(x)}{m_3(x)}\Big]-\dfrac{3\lambda_1C_0^2}{2},$$
$$R_0=-\abs{a_2}_1-\dfrac {\lambda_2}2+\dfrac{1}{\big|\frac{m_2}{c_2}\big|^*+\min\big\{\frac 1{H_0}\abs{b_1}_1\big|\frac{m_1}{c_2}\big|^*\; ;\;\frac {1}{H_0}\abs{b_1}_1^{1/2}(\abs{b_1}^*)^{1/2}\big|\frac{m_1}{c_2}\big|_2\big\}}.$$

\begin{thm} The following results hold.
\begin{itemize}
\item[{\rm(i)}] For any initial values $0\leq U_0,V_0 \in L^\infty(\0,\R)$. If
$$\Big|a_2-\frac{c_2}{m_2}\Big|_*:=\inf_{x\in\bar \0}\Big\{a_2(x)-\frac{c_2(x)}{m_2(x)}\Big\}>0,$$
then $V(t)$ is extinct.
\item[{\rm(ii)}]If $H_0,R_0>0$ and non-negative initial values $(U_0,V_0)\in E$ satisfying
$$\Big|\frac 1{U_0}\Big|_{L^{\infty}(\0,\R)}<\infty\;,\;\Big|\frac 1{V_0}\Big|_{L^{\infty}(\0,\R)}<\infty,$$ the individuals $U(t)$ and $V(t)$ are permanent.
\end{itemize}
\end{thm}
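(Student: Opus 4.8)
The plan is to carry every computation out on the strong solutions $(\bar U_n,\bar V_n)$ of \eqref{StrongSolution}, where It\^o's formula is legitimately available, to establish the extinction/permanence estimates there with constants that are \emph{uniform in $n$}, and only at the end to pass to the limit using Proposition \ref{convergence}. Throughout I would exploit Green's identity $\int_\0 A_iu\,dx=0$ together with the Neumann condition to annihilate the Laplacian in first-order (mass) computations, and in the It\^o corrections I would use two bookkeeping facts about the noise: the orthonormality $\int_\0 e_k^2\,dx=1$, which yields clean coefficients $\tfrac12\sum_k\lambda_{k,i}=\tfrac{\lambda_i}{2}$, and the pointwise bound $e_k^2\le C_0^2$, which is what forces the factor $C_0^2$ to appear in $H_0$. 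Since $\abs{\0}=1$, Cauchy--Schwarz lets me pass freely between $L^1$, $L^2$ and reciprocal norms.

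For part (i) I would apply It\^o to the total predator mass $\int_\0\bar V_n(t,x)\,dx$. The Laplacian integrates to zero, and the conversion response is controlled by
\begin{equation*}
\frac{c_2(x)\,\bar U_n\bar V_n}{m_1(x)+m_2(x)\bar U_n+m_3(x)\bar V_n}\le \frac{c_2(x)}{m_2(x)}\,\bar V_n ,
\end{equation*}
so that, discarding $-b_2\bar V_n^2\le0$ and taking expectations, the martingale drops and
\begin{equation*}
\frac{d}{dt}\,\E\int_\0\bar V_n\,dx\le -\Big|a_2-\frac{c_2}{m_2}\Big|_*\,\E\int_\0\bar V_n\,dx .
\end{equation*}
Gronwall then gives exponential decay at rate $\abs{a_2-c_2/m_2}_*>0$, uniformly in $n$; letting $n\to\infty$ through Proposition \ref{convergence} yields $\limsup_{t\to\infty}\E\int_\0 V(t,x)\,dx=0$, i.e.\ extinction of $V$.

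For the prey half of part (ii) I would work with the reciprocal first moment $\E\int_\0\bar U_n^{-1}\,dx$, whose finiteness is exactly what the preceding stopping-time lemma ($\tau_\infty^n=\infty$ a.s.) guarantees. Applying It\^o to $\bar U_n^{-1}$, the diffusion contributes the favorable term $-2d_1\int_\0\abs{\nabla\bar U_n}^2/\bar U_n^{3}\,dx\le0$, the birth/predation terms combine to $-(a_1-c_1/m_3)/\bar U_n\le-\abs{a_1-c_1/m_3}_*/\bar U_n$ after bounding the response by $c_1/m_3$, the self-limitation yields the source $+\int_\0 b_1\,dx=\abs{b_1}_1$, and the It\^o correction is $+\bar U_n^{-1}\sum_k\lambda_{k,1}e_k^2\le \lambda_1C_0^2\,\bar U_n^{-1}$ (this is where $C_0^2$ enters). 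Since $H_0\le \abs{a_1-c_1/m_3}_*-\lambda_1C_0^2$, one obtains the valid (if non-sharp) inequality
\begin{equation*}
\frac{d}{dt}\,\E\int_\0\frac{1}{\bar U_n}\,dx\le \abs{b_1}_1-H_0\,\E\int_\0\frac{1}{\bar U_n}\,dx ,
\end{equation*}
whence, under $H_0>0$, a uniform long-run bound $\limsup_{t\to\infty}\E\int_\0\bar U_n^{-1}\,dx\le \abs{b_1}_1/H_0$. By Cauchy--Schwarz $1\le(\int_\0\bar U_n)(\int_\0\bar U_n^{-1})$ and Jensen this forces $\liminf_t\E\int_\0\bar U_n\,dx\ge H_0/\abs{b_1}_1$, giving $\limsup_t\E\int_\0\bar U_n^2\,dx\ge (H_0/\abs{b_1}_1)^2=:\hat\delta$; thus $U$ is permanent, and the same reciprocal bound is the precise input required for the predator.

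For the predator I would argue by contradiction: if $V$ were not permanent then $\E\int_\0\bar V_n^2\to0$, hence $\E\int_\0\bar V_n\to0$, so in the logarithmic estimate for $\int_\0\log\bar V_n\,dx$ the $b_2\bar V_n$ term and the $m_3\bar V_n$ contamination in the denominator become negligible; the diffusion again contributes $\int_\0\abs{\nabla\bar V_n}^2/\bar V_n^2\ge0$ and, using $\int_\0 e_k^2=1$, the noise cost is exactly $\lambda_2/2$, leaving
\begin{equation*}
\frac{d}{dt}\,\E\int_\0\log\bar V_n\,dx\ge -\abs{a_2}_1-\frac{\lambda_2}{2}+\E\int_\0\frac{c_2\bar U_n}{m_1+m_2\bar U_n}\,dx-o(1).
\end{equation*}
Writing the conversion term as $\big(m_2/c_2+(m_1/c_2)\bar U_n^{-1}\big)^{-1}$, which is convex in $\bar U_n^{-1}$, Jensen gives
\begin{equation*}
\E\int_\0\frac{c_2\bar U_n}{m_1+m_2\bar U_n}\,dx\ge\frac{1}{\big|\frac{m_2}{c_2}\big|^*+\big|\frac{m_1}{c_2}\big|^*\,\E\int_\0\frac{1}{\bar U_n}\,dx},
\end{equation*}
and inserting $\E\int_\0\bar U_n^{-1}\le\abs{b_1}_1/H_0$ produces precisely the first branch of the minimum in $R_0$ (the second branch coming from a Cauchy--Schwarz estimate $\int_\0 (m_1/c_2)\bar U_n^{-1}\le \abs{m_1/c_2}_2(\int_\0\bar U_n^{-2})^{1/2}$ and a bound on the second reciprocal moment). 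Hence eventually $\tfrac{d}{dt}\E\int_\0\log\bar V_n\ge R_0>0$, forcing $\E\int_\0\log\bar V_n\to+\infty$, which contradicts $\log\xi\le\xi$ together with the boundedness of $\E\int_\0\bar V_n$; therefore $V$ is permanent. The main obstacle is precisely this predator step: one must simultaneously discard the $m_3\bar V_n$ term (justified only asymptotically under the extinction hypothesis) and convert the averaged reciprocal prey bound into a genuine lower bound on the ratio-dependent response through convexity, which is exactly what the delicate constant $R_0$, with its minimum of two H\"older/Cauchy--Schwarz estimates, encodes; the remaining technical point is to keep all approximation errors uniform in $n$ so that each $\limsup$ inequality survives the passage $n\to\infty$.
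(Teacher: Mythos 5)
Your overall strategy is the same as the paper's: approximate the mild solution by the strong solutions $(\bar U_n,\bar V_n)$ of \eqref{StrongSolution}, apply It\^o's formula to $\int_\0\bar V_n\,dx$, $\int_\0\bar U_n^{-1}\,dx$, $\int_\0\bar U_n^{-2}\,dx$ and $\int_\0\ln\bar V_n\,dx$, control the response term by Jensen's inequality, and pass to the limit through Proposition \ref{convergence}. The extinction estimate and the prey half of (ii) are set up correctly, and your explicit Cauchy--Schwarz step $1\le\big(\int_\0\bar U_n\big)\big(\int_\0\bar U_n^{-1}\big)$ converting the reciprocal bound into $\limsup_{t\to\infty}\E\int_\0 U^2\ge(H_0/\abs{b_1}_1)^2$ is actually spelled out more completely than in the paper, which stops at the first-moment bound.

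There is, however, a genuine logical gap in the predator half of (ii). You open the contradiction with ``if $V$ were not permanent then $\E\int_\0\bar V_n^2\to0$''. That is not the negation of permanence: by the paper's definition, permanence requires a threshold $\hat\delta>0$ \emph{independent of the initial data} such that $\limsup_{t\to\infty}\E\int_\0 V^2(t,x)\,dx\ge\hat\delta$, so its negation only says that every candidate threshold is violated by some initial datum. Refuting ``$\E\int_\0 V^2\to0$'' yields merely $\limsup_{t\to\infty}\E\int_\0 V^2(t,x)\,dx>0$ separately for each fixed initial condition, with no uniform lower bound, which is strictly weaker than the theorem. The paper avoids this by first fixing $\delta>0$ from $R_0>0$ via \eqref{R_1/2}, then defining $\hat\delta$ explicitly in terms of $\delta$, $H_0$, $b_1$, $b_2$ and $m_3/c_2$ alone, and assuming $\limsup_{t\to\infty}\E\int_\0 V^2<\hat\delta$ for that fixed $\hat\delta$; under this hypothesis the terms you call ``negligible'' are not $o(1)$ but are bounded by $\delta$ after finite times $T_1,T_2,T_3$, the log-estimate has slope $R_0/2$, and the resulting contradiction gives the uniform bound $\limsup_{t\to\infty}\E\int_\0 V^2\ge\hat\delta$. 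Your quantitative ingredients (the bound $\E\int_\0\bar U_n^{-1}\le\abs{b_1}_1/H_0$, the second reciprocal moment estimate, and the two-branch minimum inside $R_0$) are exactly what this fixed-$\hat\delta$ argument consumes, so the repair is mechanical, but as written your conclusion does not match the stated theorem. A secondary, smaller point: in part (i) your route through strong solutions only covers $(U_0,V_0)\in D(A_E)$, since Proposition \ref{convergence} requires such data, whereas the theorem allows any $0\le U_0,V_0\in L^\infty(\0,\R)$; the paper instead estimates $\E\int_\0 V(t,x)\,dx$ directly from the mild formulation \eqref{so}, which needs no approximation, while your version would require an additional density and continuous-dependence step, using that the decay rate $\abs{a_2-c_2/m_2}_*$ is uniform over the approximating initial data.
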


\begin{proof}
The proof for the first part is similar to \cite[ Proof of Theorem 4.1]{NY-18}. It follows \eqref{so} and properties of stochastic integral (\cite[Proposition 4.15]{Prato} and \cite[Proposition 2.9]{Curtain}) that $\forall t\geq 0$,
\begin{equation}
\begin{aligned}
0\leq \E\int_\0V(t,x)dx&\leq \int_\0 V_0(x)dx +\int_{0}^t \E\int_\0 \Big(-a_2(x)V(s,x)+\dfrac{c_2(x)V(s,x)}{m_2(x)}\Big)dxds
\\& \leq \int_\0 V_0(x)dx-\Big|a_2-\frac{c_2}{m_2}\Big|_*\int_0^t\E\int_\0 V(s,x)dxds.
\end{aligned}
\end{equation}
As a consequence, we have
$$\E \int_\0V(t,x)dx\leq -\big|a_2-\frac{c_2}{m_2}\big|_*\int_s^t \left(\int_\0V(s,x)dx\right)ds,\;\forall 0\leq s\leq t.$$
Hence, it leads to that $\lim_{t\to\infty}\E\int_\0 V(t,x)dx=0$ with exponential rate or the class $V(t)$ is extinct.
Now, we move to the second part. Since the density of $D(A_E)$ in $E$ and the continuous dependence on initial values of the solution, we can assume that $(U_0,V_0)\in D(A_E)$.
By It\^o's formula (\cite[Theorem 3.8]{Curtain}) and by a similar argument as in the processing of getting \eqref{1/s^2}, we have
\begin{equation*}
\begin{aligned}
&\!\!\! e^{H_0(t\wedge\tau_m^n)}
 \int_\0\dfrac{1}{\bar U_n(t\wedge\tau_m^n,x)}dx\\
 & \leq\int_\0 \dfrac 1{U_0(x)}dx
+\int_0^{t\wedge \tau_m^n}e^{H_0s}\Big(\int_\0\dfrac{\frac{c_1(x)}{m_3(x)}+\lambda_1C_0^2+H_0-a_1(x)}{\bar U_n(s,x)}dx+\int_\0 b_1(x)dx\Big)ds
\\&\quad+\sum_{k=1}^n\int_0^{t\wedge \tau_m^n}e^{H_0s}\Big[\int_\0\dfrac{-\sqrt{\lambda_{k,1}}e_k(x)}{\bar U_n(s,x)}dx\Big]dB_{k,1}(s)
\\&\leq \int_\0 \dfrac 1{U_0(x)}dx+\int_0^{t\wedge \tau_m^n}\abs{b_1}_1e^{H_0s}ds
\\&\quad
+\sum_{k=1}^n\int_0^{t\wedge \tau_m^n}e^{H_0s}\Big[\int_\0\dfrac{-\sqrt{\lambda_{k,1}}e_k(x)}{\bar U_n(s,x)}dx\Big]dB_{k,1}(s).
\end{aligned}
\end{equation*}
Therefore, taking  expectations on both sides and letting $m\to\infty$, we obtain
\begin{equation*}
\E\int_\0\dfrac{1}{\bar U_n(t,x)}dx\leq e^{-H_0t}\int_\0 \dfrac 1{U_0(x)}dx+\abs{b_1}_1\dfrac{e^{H_0t}-1}{H_0e^{H_0t}},\quad\forall t\geq 0, \forall n\in \N.
\end{equation*}
As a consequence,
\begin{equation}\label{e1/u}
\limsup_{t\to\infty}\limsup_{n\to\infty}\E\int_\0\dfrac{1}{\bar U_n(t,x)}dx\leq \dfrac{\abs{b_1}_1}{H_0}.
\end{equation}
The convergence \eqref{convergeS}, the H\"older's inequality, and \eqref{e1/u} yield
$$\limsup_{t\to\infty}\E\int_\0U(t,x)dx=\limsup_{t\to\infty}\limsup_{n\to\infty}\E\int_\0 \bar U_n(t,x)dx\geq \dfrac {H_0}{\abs{b_1}_1}.$$
So, the individual $U(t)$ is permanent.
Similarly, we also obtain from It\^o's formula (\cite[Theorem 3.8]{Curtain}) that
\begin{equation*}
\begin{aligned}
\E\int_\0&\dfrac{1}{\bar U^2_n(t,x)}dx\leq e^{-2H_0t}\int_\0 \dfrac 1{U_0^2(x)}dx\\
&\quad
+e^{-2H_0t}\E\int_0^{t}e^{2H_0s}\Big(2\int_\0\dfrac{\frac{c_1(x)}{m_3(x)}
+\frac{3\lambda_1C_0^2}2+H_0-a_1(x)}{\bar U^2_n(s,x)}dx\\
&\quad +2\abs{b_1}^*\int_\0 \dfrac{1}{\bar U_n(s,x)}dx\Big)ds
\\&\leq e^{-2H_0t}\int_\0 \dfrac 1{U_0^2(x)}dx
+2\abs{b_1}^* e^{-2H_0t}\int_0^{t}e^{2H_0s}\E\int_\0 \dfrac{1}{\bar U_n(s,x)}dxds,
\end{aligned}
\end{equation*}
which implies that
\begin{equation}\label{1/u^2}
\begin{aligned}
\limsup_{n\to\infty}\E\int_\0\dfrac{1}{\bar U^2_n(t,x)}dx&\leq e^{-2H_0t}\int_\0 \dfrac 1{U_0^2(x)}dx
\\&\quad+2\abs{b_1}^* e^{-2H_0t}\int_0^{t}e^{2H_0s}\Big[\limsup_{n\to\infty}\E\int_\0 \dfrac{1}{\bar U_n(s,x)}dx\Big]ds.
\end{aligned}
\end{equation}
Applying \eqref{e1/u} to \eqref{1/u^2}, we obtain
\begin{equation}\label{e1/u^2}
\limsup_{t\to\infty}\limsup_{n\to\infty}\E\int_\0\dfrac{1}{\bar U^2_n(t,x)}dx\leq \dfrac{\abs{b_1}_1\abs{b_1}^*}{H_0^2}.
\end{equation}
Since the definition of $R_0$ and $R_0>0$,
there exists a $\delta>0$ such that
\begin{equation}\label{R_1/2}\begin{array}{ll}
&\!\!\!\disp
-\abs{a_2}_1-\dfrac {\lambda_2}2-\delta\\
&\qquad +\dfrac{1}{\big|\frac{m_2}{c_2}\big|^*+\min\big\{\frac 1{H_0}\abs{b_1}_1\big|\frac{m_1}{c_2}\big|^*+\delta\; ;\;\frac {1}{H_0}\abs{b_1}_1^{1/2}(\abs{b_1}^*)^{1/2}\big|
\frac{m_1}{c_2}\big|_2+\delta\big\}+\delta}\\
& \quad \geq \dfrac{R_0}{2}.\end{array}
\end{equation}
Put $\hat\delta=\min\Big\{\dfrac{\delta^2}{4\abs{b_2}_2^2}\;;\;\dfrac{H_0^2\delta^2}{4(\big|\frac{m_3}{c_2}\big|^*)^2(\abs{b_1}^*)^2}\Big\}$. By a contradiction argument, we assume that
\begin{equation}\label{contradiction}
\limsup_{t\to\infty}\E\int_\0 V^2(t,x)dx<\hat\delta,
\end{equation}
which means
\begin{equation}\label{contraction}
\limsup_{t\to\infty}\lim_{n\to\infty}\int_\0 \bar V_n^2(t,x)dx<\hat\delta.
\end{equation}
It follows from \eqref{contraction} that
\begin{equation*}
\begin{aligned}
\limsup_{t\to\infty}\limsup_{n\to\infty}\E\int_\0 b_2(x)\bar V_n(t,x)dx
&\leq\abs{b_2}_2 \limsup_{t\to\infty}\limsup_{n\to\infty}\Big(\E\int_\0\bar V_n^2(t,x)dx\Big)^{1/2}
\\&\leq \abs{b_2}_2\hat\delta^{1/2}\leq \dfrac{\delta}2.
\end{aligned}
\end{equation*}
It implies that there exists a $T_1$ such that
\begin{equation}\label{T_1}
\limsup_{n\to\infty}\E\int_\0 b_2(x)\bar V_n(t,x)dx\leq \delta\quad\forall t\geq T_1.
\end{equation}
On the other hand, it follows from H\"older's inequality, \eqref{e1/u^2}, and \eqref{contraction} that
\begin{equation*}
\begin{aligned}
\limsup_{t\to\infty}\limsup_{n\to\infty}&\;\E\int_\0\dfrac{\bar V_n(t,x)}{\bar U_n(t,x)}dx
\\&\leq \limsup_{t\to\infty}\limsup_{n\to\infty}\Big(\E\int_\0\bar V_n^2(t,x)dx\Big)^{1/2}\\
&\quad \times
\limsup_{t\to\infty}\limsup_{n\to\infty}\Big(\E\int_\0\dfrac 1{\bar U_n^2(t,x)}dx\Big)^{1/2}
\\&\leq \big(\frac 1{H_0^2}\hat\delta\abs{b_1}_1\abs{b_1}^*\big)^{1/2}\leq\dfrac{\delta}{2\big|\frac{m_3}{c_2}\big|^*}.
\end{aligned}
\end{equation*}
Therefore, there exists $T_2$ such that
\begin{equation}\label{T_2}
\limsup_{n\to\infty}\;\Big|\frac{m_3}{c_2}\Big|^*\E\int_\0\dfrac{\bar V_n(t,x)}{\bar U_n(t,x)}dx\leq \delta\quad\forall t\geq T_2.
\end{equation}
Moreover, it is clear that
\begin{equation}\label{m_1/c_2U}\begin{array}{ll}
\disp \E\int_\0\frac{m_1(x)}{c_2(x)\bar U_n(t,x)}dx\leq \min\Big\{&\!\!\!\Big|\dfrac{m_1}{c_2}\Big|^*\E\int_\0\dfrac 1{\bar U_n(t,x)}dx,\\
 & \quad \Big|\dfrac{m_1}{c_2}\Big|_2\Big(\E\int_\0\dfrac 1{\bar U_n^2(t,x)}dx\Big)^{1/2}\Big\}.\end{array}
\end{equation}
Combining \eqref{e1/u},\eqref{e1/u^2} and \eqref{m_1/c_2U} obtains
\begin{equation*}
\begin{aligned}
\limsup_{t\to\infty}&\limsup_{n\to\infty}\E\int_\0\frac{m_1(x)}{c_2(x)\bar U_n(t,x)}dx
\\&\leq \min\Big\{\frac 1{H_0}\abs{b_1}_1\Big|\dfrac{m_1}{c_2}\Big|^*,\; \;\frac {1}{H_0}\abs{b_1}_1^{1/2}(\abs{b_1}^*)^{1/2}\Big|\dfrac{m_1}{c_2}\Big|_2\Big\}.
\end{aligned}
\end{equation*}
Thus, there exists $T_3$ such that for all $t\geq T_3$
\begin{equation}\label{T_3}\barray{ll}
&\!\!\!\disp
\limsup_{n\to\infty}\E\int_\0\frac{m_1(x)}{c_2(x)\bar U_n(t,x)}dx\leq  \min\Big\{\frac 1{H_0}\abs{b_1}_1\Big|\dfrac{m_1}{c_2}\Big|^*+\delta, \\ 
&\quad\disp \frac {1}{H_0}\abs{b_1}_1^{1/2}(\abs{b_1}^*)^{1/2}\Big|\dfrac{m_1}{c_2}\Big|_2+\delta\Big\}.
\end{array}\end{equation}
Now, by It\^o's formula (\cite[Theorem 3.8]{Curtain}) again we have
\begin{equation}
\begin{aligned}
&\int_\0\ln\bar V_n(t\wedge \eta_m^n,x)dx
\\&\geq \int_\0 \ln V_0(x)dx+\int_0^{t\wedge\eta_m^n}\int_\0\dfrac{d_2\abs{\nabla \bar V_n(s,x)}^2}{\bar V^2_n(s,x)}dxds-\dfrac {\lambda_2}2(t\wedge \eta_m^n)
\\&\quad+\int_0^{t\wedge \eta_m^n}\int_\0\Big(-a_2(x)-b_2(x)\bar V_n(s,x)\\
& \quad +\dfrac{c_2(x)\bar U_n(s,x)}{m_1(x)+m_2(x)\bar U_n(s,x)+m_3(x)\bar V_n(s,x)}\Big)dxds
\\&\quad +\int_0^{t\wedge\eta_m^n}\sum_{k=1}^n\sqrt{\lambda_{k,2}}\Big(\int_\0 e_k(x)dx\Big)dB_{k,2}(s).
\end{aligned}
\end{equation}
Taking expectation on both sides and letting $m\to\infty$ imply
\begin{equation}\label{elnV_n}
\begin{aligned}
\E\int_\0&\ln \bar V_n(t,x)dx\geq \int_\0 \ln V_0(x)dx-\big(\abs{a_2}_1+\dfrac {\lambda_2}2\big)t
\\&+\int_0^{t}\E\int_\0\Big(-b_2(x)\bar V_n(s,x)\\
& \quad +\dfrac{c_2(x)\bar U_n(s,x)}{m_1(x)+m_2(x)\bar U_n(s,x)+m_3(x)\bar V_n(s,x)}\Big)dxds.
\end{aligned}
\end{equation}
By Jensen's inequality, we obtain
\begin{equation}\label{Jensen}
\begin{aligned}
\E\int_\0&\dfrac{c_2(x)\bar U_n(s,x)}{m_1(x)+m_2(x)\bar U_n(s,x)+m_3(x)\bar V_n(s,x)}dx
\\&\geq \E\int_\0\dfrac{1}{\big|\frac{m_2}{c_2}\big|^*+\frac{m_1(x)}{c_2(x)\bar U_n(s,x)}+\big|\frac{m_3}{c_2}\big|^*\frac{\bar V_n(s,x)}{\bar U_n(s,x)}}dx
\\&\geq \dfrac{1}{\big|\frac{m_2}{c_2}\big|^*+\E\int_\0\frac{m_1(x)}{c_2(x)\bar U_n(s,x)}dx+\big|\frac{m_3}{c_2}\big|^*\E\int_\0\frac{\bar V_n(s,x)}{\bar U_n(s,x)}dx}.
\end{aligned}
\end{equation}
Applying \eqref{Jensen} to \eqref{elnV_n}, we have
\begin{equation}\label{elnV}
\begin{aligned}
&\limsup_{n\to\infty}\E\int_\0\ln \bar V_n(t,x)dx\geq \int_\0 \ln V_0(x)dx-\big(\abs{a_2}_1+\dfrac {\lambda_2}2\big)t
\\&\;\;-\int_0^{t}\Big[\limsup_{n\to\infty}\E\int_\0b_2(x)\bar V_n(s,x)dx\Big]ds
\\&\;\;+ \int_0^t\dfrac{1}{\Big|\frac{m_2}{c_2}\Big|^*+\displaystyle\limsup_{n\to\infty}\E\int_\0\frac{m_1(x)}{c_2(x)\bar U_n(s,x)}dx+\Big|\frac{m_3}{c_2}\Big|^*\displaystyle\limsup_{n\to\infty}\E\int_\0\frac{\bar V_n(s,x)}{\bar U_n(s,x)}dx}ds.
\end{aligned}
\end{equation}
Let $T=\max\{T_1,T_2,T_3\}.$  We obtain from \eqref{elnV},\eqref{T_1}, \eqref{T_2}, \eqref{T_3}, and \eqref{R_1/2} that $\forall t\geq T$
\begin{equation}
\begin{aligned}
\limsup_{n\to\infty}\;&\E\int_\0\ln \bar V_n(t,x)dx
\\&\geq \int_\0 \ln V_0(x)dx-\big(\abs{a_2}_1+\dfrac {\lambda_2}2\big)T-\int_0^{T}\Big[\limsup_{n\to\infty}\E\int_\0b_2(x)\bar V_n(s,x)dx\Big]ds
\\&+\dfrac{R_0(t-T)}2.
\end{aligned}
\end{equation}
Since $R_0>0$, we have
$$
\limsup_{t\to\infty}\limsup_{n\to\infty}\E\int_\0\ln \bar V_n(t,x)dx=\infty.
$$
Therefore, it follows from the convergence \eqref{convergeI} and Jensen's inequality that
\begin{equation}\label{limsupelnv}
\begin{aligned}
\limsup_{t\to\infty}\ln \E\int_\0V(t,x)dx&=\limsup_{t\to\infty}\ln \big[\limsup_{n\to\infty}\E\int_\0\bar V_n(t,x)dx\big]
\\&\geq \limsup_{t\to\infty}\limsup_{n\to\infty}\E\int_\0\ln \bar V_n(t,x)dx=\infty.
\end{aligned}
\end{equation}
However, combining \eqref{contradiction} and \eqref{limsupelnv} leads to a contradiction. This implies
$$\limsup_{t\to\infty}\int_\0 V^2(t,x)dx\geq\hat\delta.
$$
Thus, the individual $V(t)$ is permanent.
\end{proof}

\section{An Example}
\label{example}
In this section,
we consider an example when the processes driving noise processes in
equation \eqref{eq} are standard Brownian motions and the coefficients are independent of space, as following
\begin{equation}\label{eq-1}
\begin{cases}
d
 U(t,x)\!\!=\!\!\Big[d_1\Delta U(t,x)+U(t,x)\big(a_1-b_1U(t,x)\big)\!-\!
 \dfrac{c_1U(t,x)V(t,x)}{m_1+m_2U(t,x)+ m_3V(t,x)}\Big]dt
\\ \quad \hfill +\sigma_1U(t,x)dB_1(t)\quad \text{in}\ \R^+\times\0, \\
d
 V(t,x)\!\!=\!\!\Big[d_2\Delta V(t,x)\!-\!V(t,x)\big(a_2+b_2V(t,x)\big)\!+\!\1\dfrac{c_2 U(t,x)V(t,x)}{m_1+m_2U(t,x)+ m_3V(t,x)}\Big]dt
\\
\quad \hfill+\sigma_2 V(t,x)dB_2(t)\quad\text{in} \ \R^+\times\0,\\[1ex]
\partial_{\nu}U(t,x)=\partial_{\nu}V(t,x)=0\quad
\quad\quad\quad\quad\quad\quad\text{on} \;\;\;\;\R^+\times\partial\0,\\
U(x,0)=U_0(x),V(x,0)=V_0(x)\quad\quad\quad\quad\text{in} \;\;\;\;\0,
\end{cases}
\end{equation}
where $a_i,b_i,c_i,m_i$ are positive constants, and $B_1(t)$, $B_2(t)$ are independent standard Brownian motions.
As we obtained above, for any initial values $0\leq U_0,V_0\in L^\infty(\0,\R)$,
\eqref{eq-1} has unique solution $U(t,x),V(t,x)\geq 0.$ Moreover, the long-time behavior of the system is shown as the following theorem.

\begin{thm}\label{longtimeexample}
Let $U(t,x),V(t,x)$ be the mild solution of equation \eqref{eq-1}.
\begin{itemize}
\item[{\rm(i)}] For any non-negative initial values $U_0,V_0\in L^\infty(\0,\R)$, if $a_2>\dfrac{c_2}{m_2}$, then the individuals $V(t)$ is extinct.
\item[{\rm(ii)}] Assume that non-negative initial values $(U_0,V_0)\in C(\bar\0,\R^2)$ satisfy $$\Big|\frac1{U_0}\Big|_{L^{\infty}(\0,\R)} <\infty
  \hbox{ and } \ \Big|\frac1{V_0}\Big|_{L^{\infty}(\0,\R)} <\infty.$$
If $d:=a_1-\dfrac{c_1}{m_3}-\dfrac {3\sigma_1^2}2>0$, and $\dfrac{dc_2}{m_2d+b_1m_1}>a_2+\dfrac{\sigma_2^2}{2}$, then the classes $U(t),V(t)$ are permanent.
\end{itemize}
\end{thm}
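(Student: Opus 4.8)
The plan is to recognize that the example system \eqref{eq-1} is exactly the special case of the abstract model \eqref{eq1} in which the driving noise is concentrated on the single constant mode, and then to read off parts (i) and (ii) from the extinction and permanence theorem of Section~\ref{sec:longtime}. Since we have normalized $\abs{\0}=1$, the constant function is an admissible orthonormal element, so I would take $e_1\equiv 1$ (so that $\int_\0 e_1^2\,dx=\abs{\0}=1$), set $\lambda_{1,i}=\sigma_i^2$, and put $\lambda_{k,i}=0$ for all $k\geq 2$, completing $\{e_1\}$ to any uniformly bounded orthonormal basis. With this choice the infinite-dimensional Wiener process collapses to $W_i(t)=\sigma_i B_{1,i}(t)\,e_1$, so the term $U(t,x)\,dW_1(t,x)$ becomes precisely $\sigma_1U(t,x)\,dB_1(t)$, and similarly for $V$. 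The nuclearity condition \eqref{nuclearcondition} holds trivially, and existence, uniqueness, and positivity of the mild solution follow at once from Theorem~\ref{existence}.

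For part~(i), the coefficients are constant, so the threshold quantity in part~(i) of the theorem of Section~\ref{sec:longtime} reduces to $\big|a_2-\frac{c_2}{m_2}\big|_*=a_2-\frac{c_2}{m_2}$. Hence the hypothesis $a_2>\frac{c_2}{m_2}$ is exactly the condition $\big|a_2-\frac{c_2}{m_2}\big|_*>0$, and extinction of $V(t)$ is immediate.

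For part~(ii), the main task is to evaluate $H_0$ and $R_0$ under the standing assumptions. Because only the mode $e_1\equiv 1$ carries noise, one has $\sum_k\lambda_{k,i}e_k^2\equiv\sigma_i^2$, so every It\^o-correction term that the general proof bounded by $C_0^2\lambda_i$ is here exactly $\sigma_i^2$; in effect one substitutes $C_0=1$ and $\lambda_i=\sigma_i^2$. This gives
\begin{equation*}
H_0=\Big|a_1-\frac{c_1}{m_3}\Big|_*-\frac{3\lambda_1C_0^2}{2}=a_1-\frac{c_1}{m_3}-\frac{3\sigma_1^2}{2}=d,
\end{equation*}
so $H_0>0$ is precisely $d>0$. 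For $R_0$, the constancy of the coefficients together with $\abs{\0}=1$ collapses each norm to its pointwise value: $\abs{a_2}_1=a_2$, $\lambda_2=\sigma_2^2$, $\big|\frac{m_2}{c_2}\big|^*=\frac{m_2}{c_2}$, $\big|\frac{m_1}{c_2}\big|^*=\big|\frac{m_1}{c_2}\big|_2=\frac{m_1}{c_2}$, and $\abs{b_1}_1=\abs{b_1}^*=b_1$. In particular the two candidates inside the minimum coincide, since $\abs{b_1}_1^{1/2}(\abs{b_1}^*)^{1/2}=b_1=\abs{b_1}_1$, both equalling $\frac{b_1m_1}{H_0c_2}=\frac{b_1m_1}{dc_2}$. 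Substituting and clearing the denominator yields
\begin{equation*}
R_0=-a_2-\frac{\sigma_2^2}{2}+\frac{1}{\frac{m_2}{c_2}+\frac{b_1m_1}{dc_2}}=-a_2-\frac{\sigma_2^2}{2}+\frac{dc_2}{m_2d+b_1m_1},
\end{equation*}
so $R_0>0$ is equivalent to $\frac{dc_2}{m_2d+b_1m_1}>a_2+\frac{\sigma_2^2}{2}$, which is exactly the second hypothesis. Since the initial data lie in $C(\bar\0,\R^2)$ with $\big|\frac1{U_0}\big|_{L^\infty(\0,\R)}$ and $\big|\frac1{V_0}\big|_{L^\infty(\0,\R)}$ finite, part~(ii) of the theorem of Section~\ref{sec:longtime} applies and gives permanence of both $U(t)$ and $V(t)$.

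There is no genuine obstacle beyond careful bookkeeping. The two points that require a moment's attention are the identification of the effective noise intensity $C_0=1$, $\lambda_i=\sigma_i^2$ coming from the single constant mode, and the verification that the two expressions inside the minimum defining $R_0$ collapse to the same value; the latter is what makes the threshold take the clean form $\frac{dc_2}{m_2d+b_1m_1}$.
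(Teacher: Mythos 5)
Your proposal is correct and is precisely the argument the paper intends: Theorem \ref{longtimeexample} is stated as an immediate specialization of the extinction/permanence theorem of Section \ref{sec:longtime}, obtained by taking the single constant noise mode $e_1\equiv 1$ (using $\abs{\0}=1$) with $\lambda_{1,i}=\sigma_i^2$, under which $\big|a_2-\frac{c_2}{m_2}\big|_*=a_2-\frac{c_2}{m_2}$, $H_0=d$, and $R_0=-a_2-\frac{\sigma_2^2}{2}+\frac{dc_2}{m_2d+b_1m_1}$. Your two bookkeeping observations --- that the It\^o correction $\sum_k\lambda_{k,i}e_k^2\equiv\sigma_i^2$ makes the effective value of $C_0^2\lambda_i$ equal to $\sigma_i^2$, and that the two expressions in the minimum defining $R_0$ coincide for constant coefficients --- are exactly what is needed to recover the stated thresholds.
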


\end{document}